\def\bc{\begin{center}}
\def\ec{\end{center}}
\def\be{\begin{equation}}
\def\ee{\end{equation}}
\def\F{\mathbb F}
\def\N{\mathbb N}
\newcommand\hdim{\dim_{\mathrm H}}
\def\A{\textsf{A}}
\newtheorem{lem}{Lemma}[section]
\newtheorem{dfn}[lem]{Definition}
\newtheorem{pro}[lem]{Proposition}
\newtheorem{thm}[lem]{Theorem}
\newtheorem{rem}{Remark}
\numberwithin{equation}{section}
\begin{document}
\title[Divergence points in $\beta$-dynamical systems]{Multifractal analysis of the divergence points of Birkhoff averages in $\beta$-dynamical systems}

\author[Y. H. Chen]{Yuanhong Chen}
\address{School of Mathematics and Statistics\\ Huazhong University of Science and Technology\\
430074 Wuhan, Hubei, P.R. China}
\email{yhchen317@163.com}

\author[Z. L. Zhang]{Zhenliang Zhang*}
\address{School of Mathematics and Statistics\\ HuaZhong Universitry of Science and Technology\\
430074 Wuhan, Hubei, P. R. China}
\email{zhliang\_zhang@163.com}

\author[X. J. Zhao]{Xiaojun Zhao}
\address{School of Economics\\Peking University\\
100871  Peking, P.R. China}
\email{zhaoxiaojun@pku.edu.cn}

\date{}

\begin{abstract}
This paper is aimed at a detailed study of the multifractal analysis of the so-called divergence points
 in the system of $\beta$-expansions.
More precisely,
let $([0,1),T_{\beta})$ be the $\beta$-dynamical system for a general $\beta>1$
and $\psi:[0,1]\mapsto\mathbb{R}$ be a continuous function.
Denote by $\textsf{A}(\psi,x)$ all the accumulation points of
$\Big\{\frac{1}{n}\sum_{j=0}^{n-1}\psi(T^jx): n\ge 1\Big\}$.
The Hausdorff dimensions of the sets
$$\Big\{x:\textsf{A}(\psi,x)\supset[a,b]\Big\},\ \ \Big\{x:\textsf{A}(\psi,x)=[a,b]\Big\}, \
\Big\{x:\textsf{A}(\psi,x)\subset[a,b]\Big\}$$
i.e.,
the points for which the Birkhoff averages of $\psi$ do not exist but behave
in a certain prescribed way,
are determined completely for any continuous function $\psi$.
\end{abstract}

\keywords {divergence point; $\beta$-expansion; Hausdorff dimension.}

\maketitle

\section{Introduction}
Let $(X,T)$ be a dynamical system.
Given an integrable function $\psi$,
call $x\in X$ a $\psi$-divergence point,
or simply divergence point, if the limit of the Birkhoff averages
 \begin{equation}
 \label{f1}
 \lim_{n\rightarrow\infty}\frac{1}{n}\sum_{j=0}^{n-1}\psi(T^{j}x):=\lim_{n\to\infty}\frac{1}{n}S_n\psi(x)
 \end{equation}
 does not exist.
 In the sense of Birkhoff's ergodic theorem,
 the divergence points are not detectable for any invariant probability measure.
 However,
 it is known that the divergence points can be large from the point of view of dimension theory,
 once $\psi$ is not cohomologous to a constant (see for examples \cite{Ba,FFW,Th}).
 Moreover, as Ruelle said,
 points with converging Birkhoff averages can only see average behavior,
 while the divergence points would reflect a finer structure of the system \cite{Ru}.
 This leads to a rich study on the structure of these points.
 Barreira and Schmeling initiated the study about the size of the divergence points in Markov systems \cite{Ba},
 which was also extended to systems of conformal repeller,
 conformal horseshoes, $\beta$-expansions,
 see \cite{BS,Th,Chen,T} and references therein.

To have a better understanding of the divergence points
and to provide extremely precise quantitative information
about the distribution of the individual divergence points,
Olsen \cite{O3} initiated a detailed study of the fractal structure of those points.
More precisely, the multifractal decomposition sets were considered,
where the Birkhoff averages diverge in a prescribed way:
 \begin{enumerate}[(1)]
  \item $E_{\supset[a,b]}=\Big\{x\in X:\textsf{A}(\psi,x)\supset[a,b]\Big\};$
  \item $E_{=[a,b]}=\Big\{x\in X:\textsf{A}(\psi,x)=[a,b]\Big\};$
  \item $E_{\subset[a,b]}=\Big\{x\in X:\textsf{A}(\psi,x)\subset[a,b]\Big\},$
\end{enumerate}
where $\textsf{A}(\psi,x)$ denotes the set of accumulation points of $\Big\{\frac{1}{n}S_n\psi(x): n\ge 1\Big\}$. One is referred to a series of work of Olsen \cite{O1,O2,O3,O4},  Olsen \& Winter \cite{O5,O6}, Olsen, Baek \& Snigireva \cite{BO} and references therein.

 It should be pointed that most of them studied the dimensions of the sets  defined above in the systems with Markov properties. In this paper, we focus on the $\beta$-expansions  which is a non-Markov property for a general $\beta>1$. This non-Markov property always plays a main barrier in studying the metrical properties of $\beta$-expansion. In order to understand better the non-Markov property and find ways to conquer difficulties caused by it, we aim at giving a detailed study of the multifractal analysis of the divergence points in $\beta$-expansions, by following the setting of Olsen given above.

Let us first fix some notation. Let $\beta>1$ and $T_{\beta}$ be the $\beta$-transformation given by
$$ T_{\beta}(x)=\beta x-\lfloor\beta x\rfloor,\ \ x\in [0,1)$$
where $\lfloor\cdot\rfloor$ denotes the integer part of a real number.
It is well known that $T_{\beta}$ is invariant and ergodic with respect to the Parry
measure \cite{P} given by $d{\nu}=\sum_{n:x\leq T_{\beta}^n1}\beta^{-n}dx.$ Then an application of the Birkhoff's ergodic Theorem yields that for any integrable function $\psi,$
$$\lim_{n\rightarrow\infty}\frac{1}{n}\sum_{j=0}^{n-1}\psi(T_{\beta}^j(x))=\int\psi d{\nu},\ \ \nu\text{-almost surely}.$$

 Recall that
 $\A(\psi, x)$ is the set of accumulation points of the sequence \break
  $\Big\{\frac{1}{n}S_n\psi(x)\Big\}_{n\geq1}$.
  Put $\mathfrak{L}_{\psi}=\bigcup_{x\in[0,1)}\A(\psi, x).$ When $\psi$ is continuous, $\mathfrak{L}_{\psi}$ is a closed interval and is given as
$$\mathfrak{L}_{\psi}=\Bigg\{\int\psi d{\mu}: \mu\ \text{is}\ T_{\beta}\text{-invariant}\Bigg\}.$$

The classical multifractal analysis of Birkhoff averages in $\beta$-dynamical system was extensively studied by Fan, Feng and Wu \cite{FFW} when $\beta$ is a Parry number, and by Pfister and Sullivan \cite{PS} for general $\beta>1.$ For any $\alpha\in\mathfrak{L}_{\psi},$ the dimension of the set
$$E_{\alpha}=\Bigg\{x\in[0,1):\lim_{n\rightarrow\infty}\frac{1}{n}S_n\psi(x)=\alpha\Bigg\}$$
is given by a variational principle:
\begin{equation}\label{f2}\hdim E_{\alpha}=\sup\Bigg\{\frac{h_{\mu}}{\log\beta}:\mu\in\mathfrak{M}(T_{\beta})\ \text{and}\ \int\psi d\mu=\alpha\Bigg\},\end{equation}
where $\mathfrak{M}(T_{\beta})$ is the collection of all $T_{\beta}$-invariant probability measures and $h_{\mu}$ is the measure theoretic entropy of $\mu$ (see \cite{W}, Chapter 4 for a definition of entropy). Denote $h_{\beta}(\alpha)$ the dimension of $E_{\alpha}$ for short.

In this paper, we focus on the dimensions of the sets $E_{\supset[a,b]}$, $E_{=[a,b]}$
  and $E_{\subset[a,b]}$ defined before. Let $\psi$ be a continuous function and $a$ and $b$ be two real numbers with $a<b$. Then $[a,b]\bigcap\mathfrak{L}_{\psi}$ is a closed interval, and we call it non-degenerate if it is nonempty. It is trivial to see that if $[a,b]\cap \mathfrak{L}_{\psi}$ is empty, $$
E_{\supset[a,b]}=[0,1), \ E_{=[a,b]}=\emptyset, \ \ E_{\subset[a,b]}=\emptyset.
$$ So we exclude this trivial case.
Our main result is stated as follows.
\begin{thm}
\label{t1}
Let $\beta>1$ and $\psi$ be a continuous function.
Let $a,b$ be two real numbers with $[a,b]\bigcap\mathfrak{L}_{\psi}$ non-degenerate.
Then
\begin{enumerate}[(1)]
  \item $\hdim E_{\supset[a,b]}=\hdim E_{=[a,b]}=\inf\Big\{h_{\beta}(\alpha):\alpha\in [a,b]\cap\mathfrak{L}_{\psi}\Big\},$
  \item $\hdim E_{\subset[a,b]}=\sup\Big\{h_{\beta}(\alpha):\alpha\in [a,b]\cap\mathfrak{L}_{\psi}\Big\},$
\end{enumerate} where $h_{\beta}(\alpha)$ is the dimension of $E_{\alpha}$ given in (\ref{f2}).
\end{thm}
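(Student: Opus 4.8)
The plan is to prove each of the three equalities by a matching upper and lower bound, built around the two elementary inclusions $E_{=[a,b]}\subseteq E_{\supset[a,b]}$ and $E_{\alpha^{*}}\subseteq E_{\subset[a,b]}$ (for a suitable $\alpha^{*}$), together with the classical formula $(\ref{f2})$. After intersecting with $\mathfrak{L}_{\psi}$ — which, by the non-degeneracy hypothesis, is the interval we actually work with — we may assume $[a,b]\subseteq\mathfrak{L}_{\psi}$ and $a<b$ (if $[a,b]\cap\mathfrak{L}_{\psi}$ is a single point, all three statements reduce to $(\ref{f2})$ for one level set). Recall from \cite{FFW,PS} that $h_{\beta}$ is concave on $\mathfrak{L}_{\psi}$ (the entropy map is affine and $\mu\mapsto\int\psi\,d\mu$ is linear) and continuous there, so $\inf_{[a,b]}h_{\beta}$ and $\sup_{[a,b]}h_{\beta}$ are attained. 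Two facts will be used throughout: since $\psi$ is bounded, $\frac1{n+1}S_{n+1}\psi(x)-\frac1n S_n\psi(x)\to0$, so $\A(\psi,x)$ is always a non-empty compact subinterval of $\mathfrak{L}_{\psi}$; and on a generation-$n$ cylinder (of diameter $\le\beta^{-n}$) the average $\frac1n S_n\psi$ oscillates by at most $\frac1n\sum_{k=1}^{n}\omega_\psi(\beta^{-k})\to0$, $\omega_\psi$ being the modulus of continuity of $\psi$.

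\emph{Upper bounds.} Fix $\alpha\in[a,b]$ and put $L_k=\limsup_{n}\{x:|\tfrac1n S_n\psi(x)-\alpha|<\tfrac1k\}$, so that $B(\alpha):=\{x:\alpha\in\A(\psi,x)\}=\bigcap_{k\ge1}L_k$. For large $n$ the $n$-th set defining $L_k$ is covered by those generation-$n$ cylinders on which $\frac1n S_n\psi$ lies within $\tfrac2k$ of $\alpha$; by the cylinder count underlying the upper estimate in $(\ref{f2})$ (cf. \cite{FFW,PS}) there are at most $e^{o(n)}\beta^{n\vartheta_k}$ of them, where $\vartheta_k=\sup\{h_\beta(\gamma):|\gamma-\alpha|\le 2/k\}$, and since each has diameter $\le\beta^{-n}$, estimating $\mathcal H^{s}$ on the tails $\bigcup_{n\ge N}(\cdot)$ and letting $N\to\infty$ gives $\hdim L_k\le\vartheta_k$. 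Hence $\hdim B(\alpha)\le\inf_k\vartheta_k=h_\beta(\alpha)$ by continuity of $h_\beta$; and since $E_{=[a,b]}\subseteq E_{\supset[a,b]}\subseteq B(\alpha)$ for every $\alpha\in[a,b]$, the $\le$ part of (1) follows. For (2), if $x\in E_{\subset[a,b]}$ then for each $\varepsilon>0$ one has $\frac1n S_n\psi(x)\in[a-\varepsilon,b+\varepsilon]$ for all large $n$, so $E_{\subset[a,b]}\subseteq\bigcap_{\varepsilon>0}\bigcup_{N}\{x:\frac1n S_n\psi(x)\in[a-\varepsilon,b+\varepsilon]\ \forall\,n\ge N\}$; covering the $N$-th set at a large scale $n$ by generation-$n$ cylinders on which $\frac1n S_n\psi\in[a-2\varepsilon,b+2\varepsilon]$ and using the same count bounds its dimension by $\sup\{h_\beta(\gamma):\gamma\in[a-2\varepsilon,b+2\varepsilon]\}$, uniformly in $N$; letting $\varepsilon\to0$ and using continuity of $h_{\beta}$ on the compact set $\mathfrak{L}_{\psi}$ gives $\hdim E_{\subset[a,b]}\le\sup_{[a,b]}h_{\beta}$.

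\emph{Lower bounds.} For (2) pick $\alpha^{*}\in[a,b]$ with $h_{\beta}(\alpha^{*})=\sup_{[a,b]}h_{\beta}$: then $E_{\alpha^{*}}\subseteq E_{\subset[a,b]}$ and $(\ref{f2})$ gives $\hdim E_{\subset[a,b]}\ge h_{\beta}(\alpha^{*})$. The lower bound in (1) is the crux: I would construct a Cantor set $F\subseteq E_{=[a,b]}$ of dimension $\eta:=\min_{[a,b]}h_{\beta}$. Fix a sequence $(\alpha_k)$ dense in $[a,b]$ and a sequence $\varepsilon_k\downarrow0$. The $\beta$-specific input, to be extracted from the approximation arguments behind $(\ref{f2})$ (Pfister--Sullivan's $g$-almost product property for $\beta$-shifts \cite{PS}, or the exhaustion of the $\beta$-shift by subshifts of finite type as in \cite{FFW}), is that for each $k$ there are arbitrarily large generations $N$ admitting at least $\beta^{N(h_\beta(\alpha_k)-\varepsilon_k)}$ \emph{full} cylinders on each of which $\frac1N S_N\psi$ is within $\varepsilon_k$ of $\alpha_k$. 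Using these, build $F$ as a nested intersection of unions of cylinders arranged in blocks, the $k$-th block concatenating $M_k$ freely chosen such cylinders for $\alpha_k$ (admissible because a concatenation of full cylinders is full), with $N_k,M_k$ chosen by an interleaved induction so that each block dwarfs all preceding ones; let $m$ put uniform mass on the choices in each block. Since $\psi$ is bounded the averages move continuously in $n$, so along any $x\in F$ they spend the bulk of block $k$ within $2\varepsilon_k$ of $\alpha_k$ and sweep between $\alpha_{k-1}$ and $\alpha_k$ across block boundaries; as $(\alpha_k)$ is dense in $[a,b]$ and $\varepsilon_k\to0$, this forces $\A(\psi,x)=[a,b]$, i.e.\ $F\subseteq E_{=[a,b]}$. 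A standard mass-distribution estimate, evaluated at block-boundary generations (where the full cylinders involved have diameter exactly $\beta^{-n}$), shows $\liminf_n\frac{\log m(I_n(x))}{\log|I_n(x)|}\ge\liminf_k h_\beta(\alpha_k)=\eta$ for $m$-a.e.\ $x$, whence $\hdim E_{=[a,b]}\ge\hdim F\ge\eta$. Combined with the upper bounds and with $E_{=[a,b]}\subseteq E_{\supset[a,b]}$, this proves the theorem.

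\emph{Main obstacle.} Everything classical above is routine once the multifractal formalism of $(\ref{f2})$ is in hand; the real difficulty — and the sole place where the non-Markov nature of $\beta$-expansions genuinely enters — is the structural fact that each prescribed value $\alpha$ is carried, on a common large generation, by exponentially many \emph{full} cylinders with Birkhoff average close to $\alpha$: only full cylinders concatenate freely, and their supply in a general $\beta$-shift is not governed by a transition matrix but must be produced by approximating invariant measures with measures supported on subshifts of finite type inside the $\beta$-shift (or via the $g$-almost product property). Intertwined with this is the bookkeeping that pins $\A(\psi,x)$ down to \emph{exactly} $[a,b]$ while keeping the local dimension of $m$ at least $\eta$, which forces the thresholds $N_k$, the multiplicities $M_k$, and the errors $\varepsilon_k$ to be selected in a carefully interleaved induction.
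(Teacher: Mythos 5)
Your architecture is sound and, at the level of strategy, coincides with the paper's: upper bounds by covering with the cylinder counts behind (\ref{f2}), the lower bound for (2) from $E_{\alpha^*}\subset E_{\subset[a,b]}$, and the lower bound for (1) from a Cantor set of concatenated full cylinders realizing prescribed averages. The differences are worth noting. First, your Cantor set cycles through a dense sequence $(\alpha_k)$ in $[a,b]$, but you have already observed that $\A(\psi,x)$ is an interval (Proposition \ref{p9}); the paper exploits this to alternate only between the two endpoints $a$ and $b$, pinning $\liminf=a$ and $\limsup=b$, which shortens the bookkeeping considerably and, via concavity of $h_\beta$ (Proposition \ref{p7}), still yields the exponent $\min\{h_\beta(a),h_\beta(b)\}=\inf_{[a,b]}h_\beta$. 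Second, for the upper bound in (2) you use the ``eventually in $[a-\varepsilon,b+\varepsilon]$'' characterization, whereas the paper splits $[a,b]$ into small subintervals via a Lebesgue-number argument (Lemma \ref{l3}) and covers $E_{\subset[a,b]}$ by countably many ``infinitely often'' sets, one per subinterval; both work, but your version needs the extra (routine) step of partitioning the window $[a-2\varepsilon,b+2\varepsilon]$ into finitely many pieces so that the cylinder count for a whole window is controlled by $\sup h_\beta$ plus an error. Third, and most importantly, the fact you isolate as ``the main obstacle'' --- exponentially many \emph{full} cylinders of a common large order with Birkhoff average near a prescribed $\alpha$, at the rate $\beta^{n(h_\beta(\alpha)-\varepsilon)}$ --- is precisely where the paper's actual work lies, and you leave it as a black box to be extracted from Pfister--Sullivan or FFW. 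The paper instead produces it by an elementary and self-contained device: approximate $\beta$ from below by Parry numbers $\beta_N$, use the counting map $\pi_N$ to show that restricting to $\beta_{N}$-admissible words loses only a factor $2^{n/N}$ (Propositions \ref{p4}--\ref{p6}), and pad each such word with $0^{N}$ to force fullness (Proposition \ref{p2}); this successive-approximation-by-Parry-numbers step, with a different $\beta_{N_k}$ at each stage of the construction, is the paper's main technical contribution, so a complete write-up of your plan would have to supply an argument of this kind rather than cite it. Finally, your mass-distribution estimate ``at block-boundary generations'' needs the standard interpolation to arbitrary radii (handled in the paper by quoting the homogeneous Moran set Lemma \ref{l2}); this is routine but should be said.
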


Let's give some words about the method used here compared with other related works.

\begin{itemize}
\item
The setting here mostly follows that of  Olsen's \cite{O3} (see also Olsen \& Winter \cite{O2}),
so we first compare our method with those introduced by them.
The ideas to prove the first item in Theorem \ref{t1} are similar to them.
But the difference is that we are in a non-finite Markov setting
and some well known results in multifractal analysis cannot be applied directly.
For the second item, they applied the large deviation theory,
while a simple Lebesgue covering lemma is enough for us.

\item Quite recently, B. Li and J. Li \cite{LL} considered the dimension of the set $E_{=[a,b]}$ when $\psi(x)=\omega_1(x,\beta),$ where $\omega_1(x,\beta)$ is the first digit of the $\beta$-expansion of $x$. More precisely, they considered the set $$
{E}_{=[a,b]}(\beta, \omega_1):=\Bigg\{x\in [0,1): \A\bigg\{\frac{1}{n}\sum_{j=1}^n \omega_j(x,\beta)\bigg\}_{n\ge 1}=[a,b]\Bigg\}.
$$ Their method is due to Schmeling \cite{Schme}, which is one very useful method in studying $\beta$-expansions. But it is not applicable here for a general function $\psi$. To make it clear, let us cite Schmeling's idea:

Let $\beta_0<\beta$. One considers two systems $([0,1),T_{\beta_0})$ and $([0,1), T_{\beta})$. Then define a map $g$ between these two systems. More precisely, for any $x\in [0,1)$, let $$
x=\frac{\omega_1(x,\beta_0)}{\beta_0}+\frac{\omega_2(x,\beta_0)}{\beta_0^2}+\cdots+\frac{\omega_n(x,\beta_0)}{\beta_0^n}+\cdots
$$ be its $\beta_0$-expansion. Then define $$
g(x)=\frac{\omega_1(x,\beta_0)}{\beta}+\frac{\omega_2(x,\beta_0)}{\beta^2}+\cdots+\frac{\omega_n(x,\beta_0)}{\beta^n}+\cdots.
$$ It was proved by Schmeling that $g$ is $\frac{\log \beta_0}{\log \beta}$-H\"{o}lder continuous. Thus if there is a set $E'$ in $([0,1),T_{\beta_0})$ such that $g(E')$ is a subset of $E$ in $([0,1),T_{\beta})$, then it gives a lower bound of $\hdim E$.

Now we turn back to the set ${E}_{=[a,b]}(\beta,\omega_1)$. It is clear that for each $n\ge 1$, $\omega_n(g(x),\beta)=\omega_n(x,\beta_0)$, so $$
g\Big({E}_{=[a,b]}(\beta_0, \omega_1)\Big)\subset {E}_{=[a,b]}(\beta,\omega_1).
$$

This enables them  need only pay attention to the case when $\beta$ is a Parry number. However, for a general function $\psi$, there is no clear relation between $$
\sum_{j=0}^{n-1}\psi(T_{\beta_0}^j(x))\ \ \ {\text{and}}\ \ \sum_{j=0}^{n-1}\psi(T_{\beta}^j(g(x))).
$$

So we have to find other way out. The method used in this paper is a successive approximation of $\beta$ by Parry numbers, i.e. in different stage of the construction of a Moran subset of $E$, we use different  Parry number to approximate $\beta$.
\end{itemize}

\section{$\beta$-expansions}
In this section, we recall some basic properties of $\beta$-expansions.
The $\beta$-expansion was first introduced by R\'{e}nyi \cite{R}, which is given by the following algorithm. Let $\beta>1$ and define
$$T_{\beta}(x)=\beta x-\lfloor \beta x\rfloor,\ x\in[0,1).$$

By taking
$$\omega_n(x,\beta)=\lfloor\beta T_{\beta}^{n-1}x\rfloor$$
recursively for each $n\geq1,$ every $x\in[0,1)$ can be uniquely expanded into a finite or an infinite sequence
\begin{equation}\label{f3}
x=\frac{\omega_1(x,\beta)}{\beta}+\frac{\omega_2(x,\beta)}{\beta^2}+\cdots+\frac{\omega_n(x,\beta)}{\beta^n}+\cdots.
\end{equation}
Call the series (\ref{f3}) the $\beta$-expansion of $x$ and the sequence $\{\omega_n(x,\beta)\}_{n\geq1}$ the digit sequence of $x.$ We also write (\ref{f3}) as $x=\big(\omega_1(x,\beta),\cdots,\omega_n(x,\beta),\cdots\big).$

A finite or an infinite sequence $(\omega_1,\omega_2,\cdots)$ is said to be \emph{admissible} (with respect to the base $\beta$), if there exists an $x\in[0,1)$ such that the digit sequence (in the $\beta$-expansion) of $x$ begins with
$\omega_1,\omega_2,\cdots.$

Denote by $\Sigma_{\beta}^n$ the collection of all $\beta$-admissible sequences of length $n$ and by $\Sigma_{\beta}$ that of all infinite admissible sequences.

Now let us turn to the $\beta$-expansion of $1$, which plays a crucial role in
studying $\beta$-expansions. Let
$$1=\frac{\omega_1(1, \beta)}{\beta}+\cdots+\frac{\omega_n(1, \beta)}{\beta^n}+\cdots,$$ be the $\beta$-expansion of 1.
If it terminates, i.e. there exists $m\geq 1$ such that $\omega_m(1,\beta)\geq1$ but $\omega_n(1, \beta)=0$ for $n>m$ (those $\beta$ are called Parry numbers),
 we put
$(\omega_1^*(\beta),\omega_2^*(\beta),\cdots)
=(\omega_1(1, \beta),\cdots,\omega_{m-1}(1, \beta),\omega_m(1, \beta)-1)^{\infty},$
where $\omega^{\infty}$ denotes the periodic sequence $(\omega, \omega, \omega, . . .).$
Otherwise,
we put $(\omega_1^*(\beta),$ $\omega_2^*(\beta),\cdots)$ the infinite digit sequence
$(\omega_1(1,\beta),\omega_2(1,\beta),\cdots)$.
In both cases, we call
the sequence $(\omega_1^*(\beta),\omega_2^*(\beta),\cdots)$ the infinite $\beta$-expansion of unity.

The lexicographical order $<_{{\text{lex}}}$ between two infinite sequences is defined as follows
$$(\omega_1,\omega_2,\cdots,\omega_n,\cdots)<_{{\text{lex}}}(\omega_1{'},\omega_2{'},\cdots,\omega_n{'},\cdots)$$
if there exists $k\geq1$ such that $\omega_j=\omega_j{'}$ for $1 \leq j < k$, while $\omega_k<\omega_k{'}$.

The following result due to Parry \cite{P} is a criterion for the admissibility of a sequence.
\begin{thm}[\cite{P}]
\label{t2}
\begin{enumerate}
  \item Let $\beta>1.$ A sequence of non-negative integers $\omega=(\omega_1,\omega_2,\cdots)$ belongs to $\Sigma_{\beta}$ if and only if
  $$\forall k\geq1,\ (\omega_k,\omega_{k+1},\cdots)<_{{\text{lex}}}(\omega_1^*(\beta),\omega_{2}^*(\beta),\cdots).$$
  \item The digit sequence $(\omega_1^*(\beta),\omega_{2}^*(\beta),\cdots)$ of the $\beta$- expansion of unity is monotone as a function of $\beta.$ Therefore, if $1<\beta_1<\beta_2,$
      $$\Sigma_{\beta_1}\subset\Sigma_{\beta_2},\ \Sigma_{\beta_1}^n\subset\Sigma_{\beta_2}^n\ (\forall n\geq1).$$
\end{enumerate}
\end{thm}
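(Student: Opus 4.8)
The plan is to deduce both statements from the greedy description of the digit map $x\mapsto(\omega_{n}(x,\beta))_{n\ge1}$ on $[0,1)$, together with two structural facts about the expansion of unity $\omega^{*}(\beta)=(\omega_{1}^{*},\omega_{2}^{*},\dots)$. Write $\sigma$ for the shift on sequences and, for a sequence $c=(c_{n})$ of non-negative integers, set $v_{\beta}(c)=\sum_{n\ge1}c_{n}\beta^{-n}$. The first preliminary is \emph{order preservation}: if $x,y\in[0,1)$ have digit sequences that first differ in position $k$, then the first $k-1$ digits agree, so $x-y=\beta^{-(k-1)}\big(T_{\beta}^{\,k-1}x-T_{\beta}^{\,k-1}y\big)$, and since $T_{\beta}^{\,k-1}x,T_{\beta}^{\,k-1}y$ fall into the disjoint intervals $[\omega_{k}/\beta,(\omega_{k}+1)/\beta)$ determined by their $k$-th digits, the real order of $x$ and $y$ matches the lexicographic order of their digit sequences. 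The second preliminary records two properties of $\omega^{*}(\beta)$: that it represents $1$, i.e. $v_{\beta}(\omega^{*})=1$, and that it is \emph{shift-maximal}, i.e. $\sigma^{k}\omega^{*}\le_{\mathrm{lex}}\omega^{*}$ for all $k\ge1$ with the quantitative refinement $v_{\beta}(\sigma^{k}\omega^{*})=T_{\beta}^{\,k}1\le1$. In the non-Parry case $\omega^{*}=(\omega_{n}(1,\beta))_{n}$ is the genuine expansion of $1$ and both properties follow from the greedy remainders $T_{\beta}^{\,k}1\in[0,1)$ via order preservation; in the Parry case $\omega^{*}=(d_{1},\dots,d_{m-1},d_{m}-1)^{\infty}$, and a single geometric series gives $v_{\beta}(\omega^{*})=(1-\beta^{-m})\sum_{j\ge0}\beta^{-jm}=1$, with the shift statements obtained by direct inspection of the periodic word using that $(d_{1},\dots,d_{m})$ is itself greedy.

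The heart of part (1) is the following \emph{lex--value bridge}. For a sequence $c$ of non-negative integers all of whose shifts satisfy $\sigma^{j}c\le_{\mathrm{lex}}\omega^{*}$, one has $v_{\beta}(c)<1$ as soon as $c<_{\mathrm{lex}}\omega^{*}$; conversely, if $v_{\beta}(\sigma^{j}c)<1$ for every $j\ge0$, then $\sigma^{j}c<_{\mathrm{lex}}\omega^{*}$ for every $j$. The converse implication is elementary: if some shift were $\ge_{\mathrm{lex}}\omega^{*}$, comparing $\sigma^{j}c$ with $\omega^{*}$ at the first disagreeing index $i$ (where the digit of $\sigma^{j}c$ exceeds $\omega_{i}^{*}$) and using $v_{\beta}(\sigma^{i}\omega^{*})\le1$ forces $v_{\beta}(\sigma^{j}c)\ge1$, a contradiction. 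Granting the bridge, necessity in part (1) is immediate, since a tail $(\omega_{k},\omega_{k+1},\dots)$ of a digit sequence is exactly the digit sequence of $T_{\beta}^{\,k-1}x\in[0,1)$, whose value is $<1$. For sufficiency, assume every shift of $\omega$ is $<_{\mathrm{lex}}\omega^{*}$, put $x=v_{\beta}(\omega)$, and use the bridge to get $v_{\beta}(\sigma^{n}\omega)<1$ for all $n$; since $\beta\,v_{\beta}(\sigma^{n-1}\omega)=\omega_{n}+v_{\beta}(\sigma^{n}\omega)\in[\omega_{n},\omega_{n}+1)$, the greedy algorithm applied to $x$ reproduces $\omega$ digit by digit, so $\omega\in\Sigma_{\beta}$.

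For part (2) I will first prove that $\omega^{*}(\beta)$ is lexicographically non-decreasing in $\beta$, and then read off the inclusions. Fix $1<\beta_{1}<\beta_{2}$ and suppose, for contradiction, that $\omega^{*}(\beta_{2})<_{\mathrm{lex}}\omega^{*}(\beta_{1})$. By shift-maximality every shift of $\omega^{*}(\beta_{2})$ is $\le_{\mathrm{lex}}\omega^{*}(\beta_{2})<_{\mathrm{lex}}\omega^{*}(\beta_{1})$, so the sufficiency direction of part (1) (applied with base $\beta_{1}$) shows $\omega^{*}(\beta_{2})\in\Sigma_{\beta_{1}}$ and hence $v_{\beta_{1}}(\omega^{*}(\beta_{2}))<1$. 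But $v_{\beta}(c)$ is strictly decreasing in $\beta$ whenever $c$ has a positive digit, so $v_{\beta_{1}}(\omega^{*}(\beta_{2}))>v_{\beta_{2}}(\omega^{*}(\beta_{2}))=1$, a contradiction. Thus $\omega^{*}(\beta_{1})\le_{\mathrm{lex}}\omega^{*}(\beta_{2})$. The inclusion $\Sigma_{\beta_{1}}\subset\Sigma_{\beta_{2}}$ now follows from the criterion in part (1): if every shift of $\omega$ is $<_{\mathrm{lex}}\omega^{*}(\beta_{1})\le_{\mathrm{lex}}\omega^{*}(\beta_{2})$, then $\omega\in\Sigma_{\beta_{2}}$; and $\Sigma_{\beta_{1}}^{n}\subset\Sigma_{\beta_{2}}^{n}$ follows because a finite word is $\beta$-admissible precisely when it is a prefix of some infinite admissible sequence (e.g. its completion by $0^{\infty}$).

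The main obstacle is the forward implication of the bridge, that all shifts being $\le_{\mathrm{lex}}\omega^{*}$ together with $c<_{\mathrm{lex}}\omega^{*}$ forces $v_{\beta}(c)<1$; the naive termwise comparison fails because lexicographic order does not in general control the value of an arbitrary (non-greedy) representation. I expect to settle it by maximizing $v_{\beta}$ over the set of sequences whose shifts are all $\le_{\mathrm{lex}}\omega^{*}$, which is compact (digits are bounded by $\lfloor\beta\rfloor$) and shift-invariant, and then showing by an exchange argument—using $v_{\beta}(\sigma^{k}\omega^{*})\le1$—that the lexicographically largest maximizer is $\omega^{*}$ itself, so that the supremum equals $v_{\beta}(\omega^{*})=1$ and is attained only there. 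A secondary technical point is the careful treatment of the Parry boundary case, where $\omega^{*}$ is the periodic modification of the terminating expansion of $1$ and several of the tail values equal $1$ rather than being strictly smaller; this is exactly where the definition of the infinite expansion of unity, rather than the greedy expansion, is essential.
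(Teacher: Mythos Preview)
The paper does not prove this statement at all: Theorem~\ref{t2} is quoted as a classical result of Parry and is attributed to \cite{P} without argument, so there is no ``paper's own proof'' to compare your proposal against.

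That said, your outline is a standard and essentially sound route to Parry's theorem. The reduction of part~(1) to the lex--value bridge is exactly the right decomposition, and your proposed resolution of the forward implication via compactness of the set $\{c:\sigma^{j}c\le_{\mathrm{lex}}\omega^{*}\ \forall j\}$ together with an exchange argument pinning the maximum of $v_{\beta}$ at $\omega^{*}$ is one of the accepted proofs. Your derivation of part~(2) from part~(1) and shift-maximality of $\omega^{*}(\beta_{2})$ is clean and correct. The two places that genuinely need care, and which you have correctly flagged rather than resolved, are: (i) in the Parry case the shifts $\sigma^{jm}\omega^{*}$ coincide with $\omega^{*}$ and have value exactly $1$, so the strict inequality $v_{\beta}(c)<1$ for $c<_{\mathrm{lex}}\omega^{*}$ must be argued by showing the maximizer is \emph{unique}, not merely that the supremum is $1$; and (ii) your monotonicity argument in part~(2) only yields $\omega^{*}(\beta_{1})\le_{\mathrm{lex}}\omega^{*}(\beta_{2})$, and you then invoke part~(1) with a strict inequality $<_{\mathrm{lex}}\omega^{*}(\beta_{2})$; when $\omega^{*}(\beta_{1})=\omega^{*}(\beta_{2})$ one has $\beta_{1}=\beta_{2}$ (since $v_{\beta}(\omega^{*})=1$ determines $\beta$), so in fact the inequality is strict whenever $\beta_{1}<\beta_{2}$, but this step should be made explicit.
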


%

For any $\beta$-admissibe sequence $(\omega_1,\cdots,\omega_n)\in\Sigma_{\beta}^n,$ the set
$$I_{n,\beta}(\omega_1,\cdots,\omega_n):=\Big\{x\in[0,1):\omega_j(x,\beta)=\omega_j,1\leq j\leq n\Big\}$$
is called a cylinder of order $n$ (with respect to the base $\beta$). From the algorithm of $\beta$-expansion, it is clear that the length of $I_{n,\beta}(\omega_1,\cdots,\omega_n)$ satisfies that
$$|I_{n,\beta}(\omega_1,\cdots,\omega_n)|\leq \beta^{-n},$$
where $|\cdot|$ denotes the length of an interval.
If there is an equality, we call $I_{n,\beta}(\omega_1,\cdots,\omega_n)$ a \emph{full cylinder}.

The following simple fact will be referred to frequently, so we state it as a lemma. 
\begin{lem}\label{l1}
Assume that $\psi$ is a continuous function. Let $\epsilon>0$ and $N\in \mathbb{N}$. Then there exists $n_0\in \mathbb{N}$ such that for any $n\ge n_0$ and $x\in [0,1)$, $$
\Big|\frac{1}{n+N}S_{n+N}\psi(x)-\frac{1}{n}S_n\psi(x)\Big|<\epsilon.
$$
\end{lem}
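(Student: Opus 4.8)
The plan is to exploit the compactness of $[0,1]$: a continuous $\psi$ is bounded there, so the $N$ extra summands appearing when one passes from $S_n\psi$ to $S_{n+N}\psi$ contribute only $O(1/n)$ after normalization, and this bound is uniform in $x$ because it depends only on $\sup|\psi|$.

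Concretely, I would first set $M:=\max_{t\in[0,1]}|\psi(t)|<\infty$, which exists since $\psi$ is continuous on a compact interval. Then for every $x\in[0,1)$ and every $n\ge 1$ one has the crude bounds $|S_n\psi(x)|\le nM$ and $\big|\sum_{j=n}^{n+N-1}\psi(T_\beta^{j}x)\big|\le NM$. Next I would split the difference using $S_{n+N}\psi(x)=S_n\psi(x)+\sum_{j=n}^{n+N-1}\psi(T_\beta^{j}x)$, namely
\[
\frac{1}{n+N}S_{n+N}\psi(x)-\frac{1}{n}S_n\psi(x)
=\Big(\frac{1}{n+N}-\frac{1}{n}\Big)S_n\psi(x)+\frac{1}{n+N}\sum_{j=n}^{n+N-1}\psi(T_\beta^{j}x).
\]
The first term has absolute value at most $\frac{N}{n(n+N)}\cdot nM=\frac{NM}{n+N}$, and the second at most $\frac{NM}{n+N}$, so the whole expression is bounded by $\frac{2NM}{n+N}\le\frac{2NM}{n}$, uniformly in $x\in[0,1)$.

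Finally I would choose $n_0\in\mathbb{N}$ with $n_0>2NM/\epsilon$ (and $n_0=1$ in the trivial case $M=0$); then for all $n\ge n_0$ and all $x\in[0,1)$ the displayed quantity is $<\epsilon$, as required. There is no genuine obstacle in this argument; the only point worth stressing is that $M$, and hence $n_0$, depends only on $\psi$, $\epsilon$ and $N$ and not on $x$, which is precisely the uniformity claimed in the statement. This is exactly why continuity (rather than mere integrability) of $\psi$ is assumed here.
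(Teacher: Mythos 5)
Your proof is correct and complete; the paper states this lemma without proof (calling it a ``simple fact''), and your argument --- splitting the difference, bounding both terms by $NM/(n+N)$ with $M=\max_{[0,1]}|\psi|$, and choosing $n_0>2NM/\epsilon$ --- is exactly the standard argument the authors intend. The uniformity in $x$ via compactness of $[0,1]$ is the only point that needs making, and you make it.
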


\section{Relationship between $(\Sigma_{\beta},\,T_{\beta})$ and its subsystems}
In this section, we approximate $\beta$ from below by a sequence of Parry numbers $\beta_{N}$, and then introduce a quantitative relationship between
$(\Sigma_{\beta},T_{\beta})$ and $(\Sigma_{\beta_N},T_{\beta_N}),$  which is essential in our proof of the main theorem.

\subsection{Approximating $\beta$ from below}

Recall that the $\beta$-expansion of unity is denoted by  $(\omega^*_1(\beta), \omega^*_2(\beta),\ldots)$.

For each $N\in \N$ with
$\omega^*_{N}(\beta)\ge1$,
 def\mbox{}ine $\beta_N$ to be the unique positive root of the equation
 \begin{equation}\label{f4}
1=\frac{\omega_{1}^*(\beta)}{\beta_N^{1}}+\frac{\omega_2^*(\beta)}{\beta_N^{2}}+
\cdots+\frac{\omega_{N}^*(\beta)}{\beta_N^{N}}.
\end{equation}
It is clear that  $\beta_N\le \beta$. Thus for $n\geq 1$, $\Sigma^n_{\beta_N}\subset \Sigma^n_{\beta}$. Also, we have that  $\beta_N\to \beta$ as $N\to \infty$.

\begin{pro}[\cite{FW}]\label{p2} For any $(\omega_1,\ldots, \omega_n)\in \Sigma_{\beta_N}^n$,
$I_{n+N, \beta}(\omega_1,\ldots, \omega_n, 0^N)$ is full, or equivalently, for any $v\in \Sigma_{\beta}$, the concatenation $(\omega, 0^N, v)$ is still $\beta$-admissible.\end{pro}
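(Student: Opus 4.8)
The plan is to establish the second, equivalent formulation in the statement: that $(\omega_1,\dots,\omega_n,0^N,v)\in\Sigma_\beta$ for every $v\in\Sigma_\beta$, via a direct application of Parry's criterion, Theorem~\ref{t2}(1). Write $\omega^*=(\omega_1^*(\beta),\omega_2^*(\beta),\dots)$ for the infinite $\beta$-expansion of unity, fix $v=(v_1,v_2,\dots)\in\Sigma_\beta$, and set $\eta=(\omega_1,\dots,\omega_n,0^N,v_1,v_2,\dots)$. By Theorem~\ref{t2}(1) it suffices to check that every shift $\sigma^{k-1}\eta$, $k\ge1$, is $<_{\text{lex}}\omega^*$.

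The shifts fall into three ranges. If $k>n+N$, then $\sigma^{k-1}\eta$ is a shift of $v$, hence $<_{\text{lex}}\omega^*$ since $v\in\Sigma_\beta$. If $n+1\le k\le n+N$, then $\sigma^{k-1}\eta$ begins with the digit $0$; as $\omega_1^*(\beta)\ge1$, this forces $\sigma^{k-1}\eta<_{\text{lex}}\omega^*$. The only delicate shifts are those with $1\le k\le n$, equal to $(\omega_k,\dots,\omega_n,0^N,v_1,v_2,\dots)$. Since a suffix of a $\beta_N$-admissible word is again $\beta_N$-admissible (clear from the algorithm), the whole proposition reduces to the claim: for every $w=(w_1,\dots,w_m)\in\Sigma_{\beta_N}^m$ and every $v\in\Sigma_\beta$, one has $(w,0^N,v)<_{\text{lex}}\omega^*$.

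To prove this claim: appending a $0$ to an admissible word preserves admissibility, because the left endpoint of the cylinder $I_{m,\beta_N}(w_1,\dots,w_m)$ has digit sequence $(w,0^\infty)$; hence $(w,0^\infty)\in\Sigma_{\beta_N}$, and Theorem~\ref{t2}(1) applied to $\beta_N$ gives $(w,0^\infty)<_{\text{lex}}\omega^*(\beta_N)$. Now $\beta_N$ is, by construction, the simple Parry number whose $\beta_N$-expansion of unity is $(\omega_1^*(\beta),\dots,\omega_N^*(\beta))$, so $\omega^*(\beta_N)=(\omega_1^*(\beta),\dots,\omega_{N-1}^*(\beta),\omega_N^*(\beta)-1)^\infty$; in particular $\omega^*(\beta_N)$ and $\omega^*$ agree on the first $N-1$ coordinates and $\omega^*(\beta_N)$ is strictly smaller at coordinate $N$. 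Feeding $(w,0^\infty)<_{\text{lex}}\omega^*(\beta_N)$ through this comparison, one shows that the first coordinate at which $(w,0^N,v)$ and $\omega^*$ disagree occurs no later than the end of the block $0^N$ (coordinates $m+1,\dots,m+N$), and that at that coordinate $(w,0^N,v)$ is strictly smaller; this is precisely the role of the length $N$ of the inserted block.

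I expect the main obstacle to be this last deduction, which must cope with two degenerate configurations. When $m\ge N$, the word $w$ may reproduce a whole period $(\omega_1^*(\beta),\dots,\omega_{N-1}^*(\beta),\omega_N^*(\beta)-1)$ of $\omega^*(\beta_N)$; but then $w_N=\omega_N^*(\beta)-1<\omega_N^*(\beta)$, so $(w,0^N,v)$ already falls below $\omega^*$ at coordinate $N$. When $m<N$, the sequence $\omega^*$ may carry several zeros immediately after coordinate $m$; here the standing hypothesis $\omega_N^*(\beta)\ge1$ is indispensable, as it rules out $N$ consecutive zeros of $\omega^*$ occupying coordinates $m+1,\dots,m+N$, so the block $0^N$ in $(w,0^N,v)$ must meet a positive coordinate of $\omega^*$ before it ends. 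A short case analysis on the location of the first disagreement, split at the threshold $N$ and using these two observations, then finishes the proof.
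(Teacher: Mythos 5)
The paper states this proposition as a citation from \cite{FW} and gives no proof of its own, so there is nothing internal to compare against; your argument is the standard one from that reference (Parry's criterion plus the identity $\omega^*(\beta_N)=(\omega_1^*(\beta),\dots,\omega_{N-1}^*(\beta),\omega_N^*(\beta)-1)^\infty$) and it is correct, including the reduction of the shifts $1\le k\le n$ to the claim that $(w,0^N,v)<_{\text{lex}}\omega^*$ for every $\beta_N$-admissible $w$. The final case analysis you only sketch can be closed in one line: the first disagreement between $(w,0^\infty)$ and $\omega^*(\beta_N)$ is a strict ``less than'' at some coordinate $j$, and since $\omega^*(\beta_N)$ agrees with $\omega^*$ on coordinates $1,\dots,N-1$ and is strictly smaller at coordinate $N$, the first disagreement between $(w,0^\infty)$ and $\omega^*$ occurs at some coordinate $\le N$ and is again strict; as $(w,0^N,v)$ coincides with $(w,0^\infty)$ on its first $m+N\ge N$ coordinates, the claim follows, subsuming both of your degenerate configurations.
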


\subsection{Relationship between $\Sigma_{\beta}$ and $\Sigma_{\beta_N}$}
In this short subsection, we cite a quantitative relation between $\Sigma_{\beta}$ and $\Sigma_{\beta_N}$, which is borrowed from Tan and Wang \cite{TW}.


Recall that $\beta_N$ is def\mbox{}ined in (\ref{f4}). So the infinite $\beta_N$-expansion of unity is the periodic sequence $$(\omega^*_1(\beta),\ldots, \omega^*_{N-1}(\beta),\omega^*_{N}(\beta)-1)^\infty.$$
 Now we induce a $\beta_N$-admissible sequence from a $\beta$-admissible sequence.


Given a $\beta$-admissible block $\omega=(\omega_1,\cdots, \omega_n)$ with length $n$,  we obtain a
 $\beta_N$-admissible sequence $\overline{\omega}$  by changing
the blocks $(\omega_1^*(\beta),\cdots, \omega_N^*(\beta))$ in $\omega$ from the left to the right with non-overlaps
 to $(\omega_1^*(\beta),\cdots, \omega_N^*(\beta)-1)$. Denote the resulting sequence by $\overline{\omega}$. 







\begin{pro}\label{p3} $\overline{\omega}\in \Sigma_{\beta_N}^n$. \end{pro}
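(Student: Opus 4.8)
The plan is to verify the admissibility criterion of Theorem~\ref{t2}(1) directly for the sequence $\overline\omega$. Recall that $\overline\omega$ is obtained from a $\beta$-admissible block $\omega=(\omega_1,\dots,\omega_n)$ by scanning from left to right and, each time a fresh (non-overlapping) occurrence of the block $(\omega_1^*(\beta),\dots,\omega_N^*(\beta))$ is found, replacing its last letter $\omega_N^*(\beta)$ by $\omega_N^*(\beta)-1$. Since this operation never changes the length, $\overline\omega$ has length $n$, so the only thing to check is that every suffix $(\overline\omega_k,\overline\omega_{k+1},\dots,\overline\omega_n)$ is lexicographically smaller than the infinite $\beta_N$-expansion of unity, which by the remark preceding the proposition is the periodic sequence $w^*:=(\omega_1^*(\beta),\dots,\omega_{N-1}^*(\beta),\omega_N^*(\beta)-1)^\infty$. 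Here I would read "suffix" in the sense that one may pad $\overline\omega$ on the right by $0$'s (or by any $\beta_N$-admissible tail), since the criterion in Theorem~\ref{t2}(1) is about infinite sequences; padding by zeros only helps the lexicographic comparison.

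First I would fix $k$ and compare the tail $u:=(\overline\omega_k,\overline\omega_{k+1},\dots)$ with $w^*$ letter by letter. Two cases arise according to whether position $k$ falls \emph{inside} one of the replaced blocks or \emph{outside} all of them. In the "outside" case, I would locate the first replaced block whose starting index $i$ is $\ge k$; up to that point the letters of $\overline\omega$ coincide with those of $\omega$, and $\omega$ itself is $\beta$-admissible, so $(\omega_k,\omega_{k+1},\dots)<_{\mathrm{lex}}(\omega_1^*(\beta),\omega_2^*(\beta),\dots)$. If strict inequality is already witnessed before reaching index $i$, we are done since $w^*$ and the infinite $\beta$-expansion of unity agree on their first $N-1$ letters and $w^*$ is only smaller at position $N$; the only delicate sub-case is when the comparison with $(\omega_j^*(\beta))_j$ stays at equality all the way up to and into the block starting at $i$ — but then the letters $\overline\omega_i,\dots$ are exactly $\omega_1^*(\beta),\dots,\omega_{N-1}^*(\beta),\omega_N^*(\beta)-1$, i.e. they reproduce the first period of $w^*$ exactly, so the comparison is pushed past that block and one recurses (a new replaced block or the end of $\overline\omega$ must eventually be met). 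In the "inside" case, $k$ is some position $i+r$ with $0\le r\le N-1$ inside a block starting at $i$; then $(\overline\omega_k,\dots)$ begins with $(\omega_{r+1}^*(\beta),\dots,\omega_{N-1}^*(\beta),\omega_N^*(\beta)-1)$ and I would compare this against the corresponding initial segment of $w^*$, using that $w^*$ starts with $(\omega_1^*(\beta),\dots,\omega_{N-1}^*(\beta),\omega_N^*(\beta)-1)$ and invoking Parry's self-admissibility of $(\omega_j^*(\beta))_j$ (every suffix of the $\beta$-expansion of unity is $\le_{\mathrm{lex}}$ the whole sequence) to control the head $(\omega_{r+1}^*(\beta),\dots,\omega_{N-1}^*(\beta))$ versus $(\omega_1^*(\beta),\dots)$.

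The crux of the argument — and the step I expect to cause the most bookkeeping — is the recursion in the equality sub-case: one has to argue that whenever the running lexicographic comparison between a tail of $\overline\omega$ and $w^*$ has not yet broken the tie, the next $N$ letters of $\overline\omega$ are forced to be precisely one period of $w^*$, which is exactly what the replacement rule guarantees because the only way $\omega$ can stay tied with its own unity-expansion for $N$ steps is to contain the block $(\omega_1^*(\beta),\dots,\omega_N^*(\beta))$ there, and that block is then replaced. Making "non-overlapping, left-to-right" precise enough that these forced blocks line up with the replaced blocks (rather than straddling a boundary) is the technical heart; once that alignment is established, strict inequality is inevitably produced either at the terminal padding $0$'s or at the first letter where $\omega$ genuinely drops below its unity-expansion. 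I would also remark that this argument is essentially the one in Tan and Wang \cite{TW}, so at the level of the paper a short proof citing the lexicographic criterion and the replacement rule, with the case analysis above, suffices.
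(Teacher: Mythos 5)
The paper itself offers no proof of Proposition \ref{p3}: it is quoted from Tan and Wang \cite{TW}, so there is no in-paper argument to compare with. Your plan --- verifying Parry's criterion (Theorem \ref{t2}(1)) for every tail of $\overline{\omega}$ against the periodic word $w^*=(\omega_1^*(\beta),\dots,\omega_{N-1}^*(\beta),\omega_N^*(\beta)-1)^{\infty}$, splitting according to whether the tail starts inside or outside a replaced block --- is the standard route and is sound. The one step you flag but do not close, the ``alignment'' of forced occurrences with replaced occurrences, does go through, for the following reason. Suppose the comparison of $(\overline{\omega}_k,\overline{\omega}_{k+1},\dots)$ with $w^*$ is still tied after $m\le N-1$ letters, i.e.\ $\overline{\omega}_{k+i-1}=\omega_i^*(\beta)$ for $i\le m$. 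Since the replacement only ever lowers a letter, $\omega_{k+i-1}\ge\overline{\omega}_{k+i-1}=\omega_i^*(\beta)$, and admissibility of $\omega$ at position $k$ then forces $\omega_{k+i-1}=\omega_i^*(\beta)$ for all $i\le m$; in particular these positions are \emph{unmodified}. Consequently no replaced block starting at some $k'<k$ can reach into $\{k,\dots,k+N-2\}$ (its modified last letter would have to sit there), so if the tie persists for $N-1$ letters and $\omega_{k+N-1}=\omega_N^*(\beta)$, the occurrence of $(\omega_1^*(\beta),\dots,\omega_N^*(\beta))$ at $k$ overlaps no earlier replaced occurrence and is therefore itself replaced by the left-to-right rule; the tie then runs through exactly one period of $w^*$ and the argument restarts at $k+N$ using admissibility of $\omega$ there. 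Every other outcome of the comparison yields the required strict inequality (inside a replaced block one uses, as you say, self-admissibility of the expansion of unity; at the end of the word the zero padding wins because $\omega_1^*(\beta)\ge1$). So the proposal is correct modulo writing out this bookkeeping, which is essentially the computation carried out in \cite{TW}.
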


Def\mbox{}ine the map $\pi_N:\Sigma^n_{\beta}\to \Sigma^n_{\beta_N}$ as $\pi_N(\omega)=\overline{\omega}$.
\begin{pro}\label{p4}
For any $\overline{\omega}\in \Sigma^n_{\beta_N}$, $$
\sharp \pi_N^{-1}(\overline{\omega})\leq 2^{\frac{n}{N}},
$$i.e., the number of the inverse of $\overline{\omega}\in \Sigma_{\beta_N}^n$ is at most $2^{\frac{n}{N}}$.
\end{pro}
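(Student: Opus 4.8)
The plan is to realise each fibre of $\pi_N$ as a family of subsets of a short list of marked positions in $\overline{\omega}$. Throughout put $w^{\ast}=(\omega_1^{\ast}(\beta),\ldots,\omega_{N-1}^{\ast}(\beta),\omega_N^{\ast}(\beta)-1)$, the word of length $N$ that the block $(\omega_1^{\ast}(\beta),\ldots,\omega_N^{\ast}(\beta))$ turns into under $\pi_N$; note $w^{\ast}$ is exactly one period of the $\beta_N$-expansion of unity. First I would observe that, by the very definition of $\pi_N$, a preimage $\omega\in\pi_N^{-1}(\overline{\omega})$ is recovered from $\overline{\omega}$ together with the set $E(\omega)=\{i_1<\cdots<i_k\}$ of left endpoints of the length-$N$ blocks of $\omega$ that get modified: indeed $\omega_j=\overline{\omega}_j$ for every $j$ except $\omega_{i_l+N-1}=\overline{\omega}_{i_l+N-1}+1$ for $1\le l\le k$. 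Thus $\omega\mapsto E(\omega)$ is injective on $\pi_N^{-1}(\overline{\omega})$. Moreover every modified block shows up in $\overline{\omega}$ as a copy of $w^{\ast}$, so $E(\omega)$ is contained in the set $P(\overline{\omega})=\{\,p:\ 1\le p\le n-N+1,\ (\overline{\omega}_p,\ldots,\overline{\omega}_{p+N-1})=w^{\ast}\,\}$ of occurrences of $w^{\ast}$ inside $\overline{\omega}$. Hence $\sharp\pi_N^{-1}(\overline{\omega})\le 2^{\sharp P(\overline{\omega})}$, and it remains to prove $\sharp P(\overline{\omega})\le n/N$.

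For this it is enough to show that \emph{two occurrences of $w^{\ast}$ never overlap}, i.e.\ any two elements of $P(\overline{\omega})$ differ by at least $N$: then the intervals $[p,p+N-1]$, $p\in P(\overline{\omega})$, are pairwise disjoint subsets of $\{1,\ldots,n\}$, whence $N\cdot\sharp P(\overline{\omega})\le n$. Suppose instead that $p<q<p+N$ both lie in $P(\overline{\omega})$ and write $d=q-p\in\{1,\ldots,N-1\}$. Comparing the two copies of $w^{\ast}$ on their common part $[q,p+N-1]$ forces $w^{\ast}_{j+d}=w^{\ast}_j$ for $1\le j\le N-d$, so in particular $(w^{\ast}_{d+1},\ldots,w^{\ast}_N)=(w^{\ast}_1,\ldots,w^{\ast}_{N-d})$. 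Since $\omega_j^{\ast}(\beta)=w^{\ast}_j$ for $j\le N-1$ while $\omega_N^{\ast}(\beta)=w^{\ast}_N+1$, the first $N-d$ terms of $(\omega_{d+1}^{\ast}(\beta),\omega_{d+2}^{\ast}(\beta),\ldots)$ are $(w^{\ast}_1,\ldots,w^{\ast}_{N-d-1},\,w^{\ast}_{N-d}+1)$, which is lexicographically strictly larger than the first $N-d$ terms $(w^{\ast}_1,\ldots,w^{\ast}_{N-d})$ of $(\omega_1^{\ast}(\beta),\omega_2^{\ast}(\beta),\ldots)$. This contradicts the fact (Parry \cite{P}; cf.\ Theorem \ref{t2}) that the infinite expansion of unity satisfies $(\omega_{\ell+1}^{\ast}(\beta),\omega_{\ell+2}^{\ast}(\beta),\ldots)\le_{{\text{lex}}}(\omega_1^{\ast}(\beta),\omega_2^{\ast}(\beta),\ldots)$ for all $\ell\ge1$. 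Hence occurrences of $w^{\ast}$ are pairwise non-overlapping, and the proposition follows.

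The only substantive ingredient is this non-overlapping (equivalently, unbordered) property of $w^{\ast}$, and I expect that to be the main obstacle; the rest is bookkeeping. I would take some care with two points: the injectivity of $\omega\mapsto E(\omega)$ needs only that every preimage produces \emph{some} subset of $P(\overline{\omega})$, not a characterisation of which subsets actually arise (so admissibility of the resulting $\omega$ and the precise greedy rule of $\pi_N$ never have to be analysed for the upper bound); and one should check that the modified coordinates $i_l+N-1$ really lie in $\{1,\ldots,n\}$, which holds since a modified block $[i_l,i_l+N-1]$ is contained in the length-$n$ word $\omega$, so $i_l\le n-N+1$.
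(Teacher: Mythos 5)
Your proof is correct. The paper itself states Proposition \ref{p4} without proof (it is quoted from Tan and Wang \cite{TW}), and your argument is essentially the standard one: encode each preimage by the set of left endpoints of its modified blocks, observe that these must be occurrences of $w^{\ast}=(\omega_1^{\ast}(\beta),\ldots,\omega_{N-1}^{\ast}(\beta),\omega_N^{\ast}(\beta)-1)$ in $\overline{\omega}$, and bound the number of such occurrences by $n/N$. The genuinely delicate point is exactly the one you isolate, namely that distinct occurrences of $w^{\ast}$ cannot overlap; your derivation of this from the Parry inequality $\sigma^{\ell}(\omega_1^{\ast}(\beta),\omega_2^{\ast}(\beta),\ldots)\le_{\text{lex}}(\omega_1^{\ast}(\beta),\omega_2^{\ast}(\beta),\ldots)$ (a fact not literally contained in Theorem \ref{t2} as stated, but standard from \cite{P}) is correct, including the edge case $d=N-1$. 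The only cosmetic remark is that the claim ``every modified block of $\omega$ appears in $\overline{\omega}$ as a copy of $w^{\ast}$'' implicitly uses that no other replacement alters a coordinate inside that block, which is immediate from the non-overlapping of the replacements but could be said explicitly.
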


These two elementary propositions have been proved very useful in studying the dimensional theory in $\beta$-expansions. For example, by using them, it was proved that the pressure function is continuous with respect to the system \cite{TW} and that the spectrum of the level set of the classic Birkhoff averages can be achieved by an approximating method \cite{TWW}. Moreover, Propositions \ref{p5},\ref{p6} and \ref{p7} below, which are crucial in our argument, are also direct consequences of these elementary observations.


\section{Dimensional number}


In this section, we define several quantities which are closely related to the dimension of the sets in question.

From now on, we fix $\beta>1$. All the cylinders in the sequel are cylinders with respect to the base $\beta$, so we write $I_n(\omega)$ for
$I_{n,\beta}(\omega)$. Also we write $\A(x)$ for $\A(\psi,x)$.

Let $\alpha\in\mathfrak{L}_{\psi}.$ For any $\epsilon>0$ and $N\in\mathbb{N}$, define
$$\F(n,\alpha,\epsilon)=\Bigg\{\nu\in\Sigma_{\beta}^n:\Big|\frac{1}{n}S_n\psi(x)-\alpha\Big|<\epsilon,
\ {\text{for some}}\ x\in I_n(\nu)\Bigg\},$$
$$\F_N(n,\alpha,\epsilon)=\Bigg\{\nu\in\Sigma_{\beta_N}^n:\Big|\frac{1}{n}S_n\psi(x)-\alpha\Big|<\epsilon,
\ {\text{for some}}\ x\in I_n(\nu)\Bigg\}.$$

When we come to construct the desired Cantor set later, we will use $\F_N(\cdot)$ instead of $\F(\cdot)$ to avoid the barriers caused by the fact that the concatenation of two $\beta$-admissible words may not be admissible (Recall Proposition \ref{p2}). The following proposition, which is a direct consequence of Proposition \ref{p4}, says that we will not lose much if we do so.
\begin{pro}[\cite{TWW}] \label{p5}For any $\alpha\in\mathfrak{L}_{\psi},$ $$\sharp\F_N(n,\alpha, \epsilon)\le \sharp\F(n,\alpha, \epsilon)\le 2^{n/N}\sharp\F_N(n,\alpha, 2\epsilon).$$ As a consequence, the dimensional number $h_{\beta}(\alpha)$ of the set $E_{\alpha}$ can also be given as
$$h_{\beta}(\alpha)=\lim_{\epsilon\rightarrow0}\limsup_{n\rightarrow\infty}\frac{\log\sharp\F(n,\alpha,\epsilon)}{n\log\beta}=
\lim_{\epsilon\rightarrow0}\lim_{N\rightarrow\infty}\limsup_{n\rightarrow\infty}\frac{\log\sharp\F_{N}(n,\alpha,\epsilon)}{n\log\beta}.$$
\end{pro}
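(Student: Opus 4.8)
The plan is to establish the two inequalities first and then feed them into the characterization of $h_\beta(\alpha)$ from \eqref{f2}. The leftmost inequality $\sharp\F_N(n,\alpha,\epsilon)\le\sharp\F(n,\alpha,\epsilon)$ is immediate from $\Sigma_{\beta_N}^n\subset\Sigma_\beta^n$ (Theorem \ref{t2}(2)): every word counted on the left is a $\beta$-admissible word with a point in its cylinder whose Birkhoff average is $\epsilon$-close to $\alpha$, so it is counted on the right as well. (One should note the cylinders $I_n(\nu)$ on the left are $\beta$-cylinders, consistent with the convention fixed at the start of Section 4.) For the right inequality, I would use the map $\pi_N:\Sigma_\beta^n\to\Sigma_{\beta_N}^n$ together with Proposition \ref{p4}. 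Given $\nu\in\F(n,\alpha,\epsilon)$, pick $x\in I_n(\nu)$ with $|\tfrac1n S_n\psi(x)-\alpha|<\epsilon$. The key point is that $\pi_N(\nu)=\overline\nu$ differs from $\nu$ in only the finitely many coordinates where a block $(\omega_1^*(\beta),\dots,\omega_N^*(\beta))$ was decremented in its last digit, and these occurrences are non-overlapping, hence there are at most $n/N$ of them; changing each such coordinate by $1$ moves the corresponding point of $I_n(\overline\nu)$ by a controlled amount. Using uniform continuity of $\psi$ (or more directly, the observation that the endpoints of $I_n(\overline\nu)$ and $I_n(\nu)$ are close once $N$ is large relative to the modulus of continuity of $\psi$), one checks that some $y\in I_n(\overline\nu)$ satisfies $|\tfrac1n S_n\psi(y)-\alpha|<2\epsilon$, so $\overline\nu\in\F_N(n,\alpha,2\epsilon)$. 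Thus $\pi_N$ maps $\F(n,\alpha,\epsilon)$ into $\F_N(n,\alpha,2\epsilon)$, and since each fiber has size at most $2^{n/N}$ by Proposition \ref{p4}, we get $\sharp\F(n,\alpha,\epsilon)\le 2^{n/N}\sharp\F_N(n,\alpha,2\epsilon)$.

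For the second assertion, I would first recall (citing \cite{FFW,PS} or deriving it from \eqref{f2}) that $h_\beta(\alpha)=\lim_{\epsilon\to0}\limsup_{n\to\infty}\frac{\log\sharp\F(n,\alpha,\epsilon)}{n\log\beta}$; this is the standard "box-counting at scale $\epsilon$" reformulation of the variational principle, where $\F(n,\alpha,\epsilon)$ records exactly which $n$-cylinders can contribute to $E_\alpha$ at tolerance $\epsilon$, and the $\limsup$ in $n$ followed by $\epsilon\to0$ recovers the supremum of $h_\mu/\log\beta$ over invariant $\mu$ with $\int\psi\,d\mu=\alpha$. Then I take $\frac{1}{n\log\beta}\log(\cdot)$ throughout the chain of inequalities $\sharp\F_N(n,\alpha,\epsilon)\le\sharp\F(n,\alpha,\epsilon)\le 2^{n/N}\sharp\F_N(n,\alpha,2\epsilon)$, apply $\limsup_{n\to\infty}$, and note the correction term contributes $\frac{(\log 2)/N}{\log\beta}\to0$ as $N\to\infty$. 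Sandwiching gives
$$\limsup_{n\to\infty}\frac{\log\sharp\F(n,\alpha,\epsilon)}{n\log\beta}\ \le\ \lim_{N\to\infty}\limsup_{n\to\infty}\frac{\log\sharp\F_N(n,\alpha,\epsilon)}{n\log\beta}\ \le\ \limsup_{n\to\infty}\frac{\log\sharp\F(n,\alpha,2\epsilon)}{n\log\beta},$$
and letting $\epsilon\to0$ forces all three quantities to the common value $h_\beta(\alpha)$, which is the claimed double-limit formula.

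The main obstacle I expect is the $2\epsilon$ bookkeeping in the right inequality: one must verify carefully that replacing $\nu$ by $\overline\nu=\pi_N(\nu)$ does not spoil the Birkhoff-average estimate by more than a factor absorbed into $\epsilon\mapsto2\epsilon$. The subtlety is that $x\in I_n(\nu)$ need not lie in $I_n(\overline\nu)$, so one cannot use the same point; instead one picks any $y\in I_n(\overline\nu)$ and compares $S_n\psi(y)$ with $S_n\psi(x)$ via $|T_\beta^j y-T_\beta^j x|$, which requires controlling how the at-most-$n/N$ single-digit changes propagate through the orbit. The clean way is to invoke uniform continuity of $\psi$ on $[0,1]$ together with the fact that for $N$ large the relevant cylinders are nested/adjacent finely enough — but since the statement fixes $2\epsilon$ rather than "$\epsilon'$ for large $N$", the honest accounting is that the digit changes occur in positions that are sparse (density $\le 1/N$), so the orbit points $T_\beta^j y$ and $T_\beta^j x$ agree except on a set of $j$ of density $\le 1/N$, on which $|\psi|$ is bounded by $\|\psi\|_\infty$; choosing $N$ with $2\|\psi\|_\infty/N<\epsilon$ then yields $|\tfrac1n S_n\psi(y)-\tfrac1n S_n\psi(x)|<\epsilon$, hence $|\tfrac1n S_n\psi(y)-\alpha|<2\epsilon$. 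This is routine but is the one place where Proposition \ref{p4}'s combinatorial input $2^{n/N}$ and the function-theoretic input must be balanced against each other.
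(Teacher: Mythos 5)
First, a point of comparison: the paper does not actually prove Proposition \ref{p5}; it is quoted from \cite{TWW} and described as a direct consequence of Proposition \ref{p4}, so there is no in-paper argument to measure you against. Your overall strategy --- the inclusion $\Sigma_{\beta_N}^n\subset\Sigma_{\beta}^n$ for the left inequality, the projection $\pi_N$ together with the fibre bound $2^{n/N}$ for the right one, and a sandwich argument for the double-limit formula --- is exactly the intended derivation. The $\epsilon$ versus $2\epsilon$ mismatch in your displayed chain is only a relabelling slip (apply the counting inequality with $\epsilon/2$ in place of $\epsilon$ to get the middle term with $\epsilon$), and the first identity $h_{\beta}(\alpha)=\lim_{\epsilon\to0}\limsup_{n\to\infty}\frac{\log\sharp\F(n,\alpha,\epsilon)}{n\log\beta}$ does indeed have to be imported from the literature, just as the paper imports it.

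The one step that fails as written is the ``honest accounting'' you commit to at the end. It is not true that $T_{\beta}^j y$ and $T_{\beta}^j x$ agree except for a set of $j$ of density at most $1/N$: since $T_{\beta}^j x$ is determined by the whole tail $(\omega_{j+1},\omega_{j+2},\dots)$, the two orbit points generically agree for \emph{no} $j$, and the conclusion ``choose $N$ with $2\|\psi\|_{\infty}/N<\epsilon$'' does not follow. The correct version is the uniform-continuity route you mention but do not execute: the positions where $\nu$ and $\overline{\nu}=\pi_N(\nu)$ differ are at least $N$ apart and number at most $n/N$; if $j$ is at distance at least $K$ from the next such position and from $n$, then $T_{\beta}^j x$ and $T_{\beta}^j y$ lie in a common cylinder of order $K$, so $|T_{\beta}^j x-T_{\beta}^j y|\le\beta^{-K}$ and $|\psi(T_{\beta}^j x)-\psi(T_{\beta}^j y)|\le\omega_{\psi}(\beta^{-K})$, where $\omega_{\psi}$ is the modulus of continuity of $\psi$; the exceptional $j$ number at most $(n/N+1)K+K$, contributing at most $2\|\psi\|_{\infty}\big((n/N+1)K+K\big)$. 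Choosing first $K$ with $\omega_{\psi}(\beta^{-K})<\epsilon/3$, then $N$ with $2\|\psi\|_{\infty}K/N<\epsilon/3$, then $n$ large, yields $|\frac1n S_n\psi(y)-\frac1n S_n\psi(x)|<\epsilon$ and hence $\overline{\nu}\in\F_N(n,\alpha,2\epsilon)$. This repair also makes explicit that the right-hand counting inequality holds only for $N$ and $n$ large relative to $\epsilon$ and $\psi$ --- which is all that is needed, since the proposition is only ever invoked through the iterated limits in $n$, $N$ and $\epsilon$.
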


We restate the above proposition in another way, which will be frequently referred to.
\begin{pro}\label{p6}
For any $\delta>0,$ one can choose $\epsilon=\epsilon(\delta)>0,$ an integer $N=N(\delta, \epsilon)\in\mathbb{N}$ with $\omega_N^{*}(\beta)\geq1,$ and then $n\gg N$ correspondingly such that
 \begin{equation}\label{f5}
   \Bigg|\frac{\log\sharp\F_{N}(n,\alpha,\epsilon)}{n\log\beta}-h_{\beta}(\alpha)\Bigg|<\delta.
 \end{equation}
\end{pro}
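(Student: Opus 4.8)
The plan is to unwind the nested limits in the second formula for $h_\beta(\alpha)$ provided by Proposition \ref{p5}, namely
$$
h_\beta(\alpha)=\lim_{\epsilon\to 0}\lim_{N\to\infty}\limsup_{n\to\infty}\frac{\log\sharp\F_N(n,\alpha,\epsilon)}{n\log\beta},
$$
and simply read off witnesses for each limit in turn. Fix $\delta>0$. First, because the outermost limit (as $\epsilon\to 0$) exists and equals $h_\beta(\alpha)$, I can pick $\epsilon=\epsilon(\delta)>0$ so small that
$$
\Bigl|\lim_{N\to\infty}\limsup_{n\to\infty}\frac{\log\sharp\F_N(n,\alpha,\epsilon)}{n\log\beta}-h_\beta(\alpha)\Bigr|<\frac{\delta}{3}.
$$
Next, since for this fixed $\epsilon$ the limit over $N$ exists, I choose $N=N(\delta,\epsilon)\in\mathbb N$ large enough that $\omega_N^*(\beta)\ge 1$ (this is possible because infinitely many digits of the infinite $\beta$-expansion of unity are positive, unless $\beta$ is an integer, in which case one handles it directly — or more simply, one just takes $N$ along the subsequence of indices with $\omega_N^*(\beta)\ge 1$, which is exactly the set of $N$ over which $\beta_N$ was defined) and such that
$$
\Bigl|\limsup_{n\to\infty}\frac{\log\sharp\F_N(n,\alpha,\epsilon)}{n\log\beta}-\lim_{N'\to\infty}\limsup_{n\to\infty}\frac{\log\sharp\F_{N'}(n,\alpha,\epsilon)}{n\log\beta}\Bigr|<\frac{\delta}{3}.
$$
Finally, with $\epsilon$ and $N$ now fixed, the definition of $\limsup$ gives infinitely many $n$ — in particular $n$ as large as we please relative to $N$, i.e. $n\gg N$ — with
$$
\Bigl|\frac{\log\sharp\F_N(n,\alpha,\epsilon)}{n\log\beta}-\limsup_{m\to\infty}\frac{\log\sharp\F_N(m,\alpha,\epsilon)}{m\log\beta}\Bigr|<\frac{\delta}{3}.
$$
Adding the three estimates via the triangle inequality yields \eqref{f5}.

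The only point requiring a little care is the requirement $\omega_N^*(\beta)\ge 1$, which is needed so that $\beta_N$ (and hence $\F_N$) is even defined. If $\beta$ is not an integer, the sequence $(\omega_n^*(\beta))$ has infinitely many nonzero entries, so such $N$ can be taken arbitrarily large and the limit over $N$ in Proposition \ref{p5} is genuinely a limit along that index set; if $\beta$ is an integer, then $T_\beta$ is the full shift on $\beta$ symbols, $\F_N(\cdot)=\F(\cdot)$ trivially, and the statement reduces to the first equality in Proposition \ref{p5}. Either way there is no real obstacle — this proposition is purely a bookkeeping restatement of Proposition \ref{p5}, extracting a single triple $(\epsilon,N,n)$ from the iterated limits rather than an asymptotic statement. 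I do not anticipate any genuine difficulty; the "hard part," such as it is, is merely being careful that the quantifiers are chosen in the correct order ($\epsilon$ first, then $N$ depending on $\epsilon$, then $n$ depending on both), which matches the dependence $\epsilon=\epsilon(\delta)$, $N=N(\delta,\epsilon)$, $n\gg N$ asserted in the statement.
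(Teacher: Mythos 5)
Your proposal is correct and matches the paper's treatment: the paper offers no separate argument for Proposition \ref{p6}, presenting it explicitly as a restatement of Proposition \ref{p5}, and your unwinding of the three nested limits (choosing $\epsilon$, then $N$, then $n$, each via a $\delta/3$ triangle-inequality step) is exactly the intended bookkeeping. Your added remark that $N$ must be taken along the subsequence where $\omega_N^{*}(\beta)\geq 1$ (so that $\beta_N$ and hence $\F_N$ are defined) is a point the paper leaves implicit, and you handle it correctly.
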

 The concavity and continuity of $h_{\beta}(\alpha)$ were proved in Propositions 4.3 and 4.4 of \cite{TWW}.
\begin{pro}[\cite{TWW}]\label{p7}
$h_{\beta}(\alpha)$ is concave and continuous on $\mathfrak{L}_{\psi}$.
\end{pro}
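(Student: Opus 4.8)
The plan is to establish concavity directly from the variational formula~(\ref{f2}), and then to upgrade it to continuity with the help of the combinatorial description of $h_{\beta}(\alpha)$ given in Proposition~\ref{p5}. For concavity, fix $\alpha_{1},\alpha_{2}\in\mathfrak{L}_{\psi}$ and $t\in(0,1)$, and set $\alpha_{t}=t\alpha_{1}+(1-t)\alpha_{2}$. Given $\eta>0$, choose via~(\ref{f2}) measures $\mu_{1},\mu_{2}\in\mathfrak{M}(T_{\beta})$ with $\int\psi\,d\mu_{i}=\alpha_{i}$ and $h_{\mu_{i}}/\log\beta>h_{\beta}(\alpha_{i})-\eta$. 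Then $\mu=t\mu_{1}+(1-t)\mu_{2}$ is again $T_{\beta}$-invariant, satisfies $\int\psi\,d\mu=\alpha_{t}$, and has $h_{\mu}=t\,h_{\mu_{1}}+(1-t)\,h_{\mu_{2}}$ since the entropy map is affine on $\mathfrak{M}(T_{\beta})$; plugging $\mu$ into~(\ref{f2}) yields $h_{\beta}(\alpha_{t})\ge t\,h_{\beta}(\alpha_{1})+(1-t)\,h_{\beta}(\alpha_{2})-\eta$, and letting $\eta\to0$ gives concavity. (A self-contained combinatorial proof is also available: concatenate words of $\F_{N}(n_{1},\alpha_{1},\epsilon)$ and $\F_{N}(n_{2},\alpha_{2},\epsilon)$ in the proportions $t:(1-t)$, pad each block by $0^{N}$ so that the concatenation stays $\beta$-admissible by Proposition~\ref{p2}, count the results with a multinomial factor, and let $N\to\infty$ and $\epsilon\to0$ through Proposition~\ref{p5}; this is the same device as the lower bound in Theorem~\ref{t1}.)

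For continuity, write $g(\alpha,\epsilon)=\limsup_{n\to\infty}\frac{\log\sharp\F(n,\alpha,\epsilon)}{n\log\beta}$, so that $g(\alpha,\cdot)$ is non-decreasing and, by Proposition~\ref{p5}, $h_{\beta}(\alpha)=\inf_{\epsilon>0}g(\alpha,\epsilon)$. The triangle inequality applied to the defining condition of $\F$ gives the inclusion $\F(n,\alpha,\epsilon)\subset\F(n,\alpha_{0},2\epsilon)$ whenever $|\alpha-\alpha_{0}|<\epsilon$, hence $g(\alpha,\epsilon)\le g(\alpha_{0},2\epsilon)$ for all such $\alpha$. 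So, given $\delta>0$, one fixes $\epsilon_{0}>0$ with $g(\alpha_{0},2\epsilon_{0})<h_{\beta}(\alpha_{0})+\delta$ and obtains $h_{\beta}(\alpha)\le g(\alpha,\epsilon_{0})\le g(\alpha_{0},2\epsilon_{0})<h_{\beta}(\alpha_{0})+\delta$ for every $\alpha\in\mathfrak{L}_{\psi}$ with $|\alpha-\alpha_{0}|<\epsilon_{0}$; that is, $h_{\beta}$ is upper semicontinuous on $\mathfrak{L}_{\psi}$. On the interior of the interval $\mathfrak{L}_{\psi}$ (which we may assume non-degenerate, the other case being trivial) concavity by itself already forces continuity; at an endpoint $c$, writing $\alpha$ as a convex combination of $c$ and the opposite endpoint and letting $\alpha\to c$ gives $\liminf_{\alpha\to c}h_{\beta}(\alpha)\ge h_{\beta}(c)$, which together with upper semicontinuity yields continuity at $c$ as well.

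I expect the concavity step to be routine once~(\ref{f2}) is granted; the real obstacle is continuity at the two endpoints of $\mathfrak{L}_{\psi}$, since a concave function on a closed interval may in general jump downward at an endpoint, so the combinatorial upper-semicontinuity estimate above is genuinely needed there. That estimate in turn rests on the stability of the counting characterisation of $h_{\beta}$ under small perturbations of $\alpha$, which is precisely what the passage to the approximating Parry numbers $\beta_{N}$ in Proposition~\ref{p5} makes available.
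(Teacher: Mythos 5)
Your proposal is correct. Note, however, that the paper does not actually prove this proposition: it is imported wholesale from Propositions 4.3 and 4.4 of \cite{TWW}, so there is no internal argument to compare against, and your write-up amounts to a self-contained replacement for that citation. Your two halves are both sound. For concavity you lean on the variational formula (\ref{f2}) together with the convexity of $\mathfrak{M}(T_{\beta})$ and the affinity of the entropy map $\mu\mapsto h_{\mu}$ (Walters, Theorem 8.1); this is clean, but be aware that it makes the proposition logically downstream of the full variational principle of Fan--Feng--Wu and Pfister--Sullivan, whereas the argument in \cite{TWW} is carried out combinatorially on the counting sets $\F_{N}(n,\alpha,\epsilon)$ (essentially the concatenation-and-padding device you sketch in parentheses, which is also the engine of the lower bound in Theorem \ref{t1}); the combinatorial route is more self-reliant but requires care with the multinomial overcounting. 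For continuity, your observation that $\F(n,\alpha,\epsilon)\subset\F(n,\alpha_{0},2\epsilon)$ when $|\alpha-\alpha_{0}|<\epsilon$ does give $h_{\beta}(\alpha)\le g(\alpha_{0},2\epsilon_{0})$ and hence upper semicontinuity, and you correctly isolate the only delicate point: a finite concave function is automatically continuous on the interior of $\mathfrak{L}_{\psi}$, and at an endpoint $c$ concavity supplies $\liminf_{\alpha\to c}h_{\beta}(\alpha)\ge h_{\beta}(c)$ (using that $h_{\beta}$ is bounded, being a Hausdorff dimension in $[0,1]$), which combines with upper semicontinuity to close the argument. The only cosmetic caveat is that the affinity of entropy is usually stated for the associated compact symbolic system $(\Sigma_{\beta},\sigma)$ rather than for $T_{\beta}$ on the non-compact $[0,1)$, but this identification is standard and is used implicitly throughout the paper.
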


\section{{Preliminary results}}
In this section, we prove some preliminary results. Recall that
$$E_{\alpha}=\Big\{x\in[0,1):\lim_{n\rightarrow\infty}\frac{1}{n}S_n\psi(x)=\alpha\Big\}$$ and
$\hdim E_{\alpha}=h_{\beta}(\alpha).$

\begin{pro}\label{p8}
Let $\alpha\in \mathfrak{L}_{\psi}.$ Define
$E_{\ni\alpha}=\{x\in[0,1):\A(x)\ni\alpha\}.$ Then
$$\hdim E_{\alpha}=\hdim E_{\ni\alpha}.$$
\end{pro}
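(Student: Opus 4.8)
The inclusion $E_\alpha \subset E_{\ni\alpha}$ is immediate: if the Birkhoff average converges to $\alpha$, then $\alpha$ is the unique accumulation point of the sequence $\{\tfrac1n S_n\psi(x)\}$, so in particular $\alpha\in\A(x)$. Hence $\hdim E_\alpha \le \hdim E_{\ni\alpha}$. The content of the proposition is the reverse inequality $\hdim E_{\ni\alpha} \le h_\beta(\alpha)$, and the plan is to prove it by a direct covering argument. The point $x$ lies in $E_{\ni\alpha}$ precisely when $\alpha$ is an accumulation point of $\{\tfrac1n S_n\psi(x)\}$, which means: for every $\epsilon>0$ there are infinitely many $n$ with $|\tfrac1n S_n\psi(x)-\alpha|<\epsilon$. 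So for each fixed $\epsilon>0$,
$$
E_{\ni\alpha} \subset \bigcap_{M\ge 1}\ \bigcup_{n\ge M}\ \bigcup_{\nu\in\F(n,\alpha,\epsilon)} I_n(\nu),
$$
since whenever $|\tfrac1n S_n\psi(x)-\alpha|<\epsilon$ the cylinder $I_n(\omega_1(x),\dots,\omega_n(x))$ witnesses membership of its code in $\F(n,\alpha,\epsilon)$.

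Next I would turn this into a Hausdorff-dimension estimate. Fix $s > h_\beta(\alpha)$. By Proposition \ref{p5} (equivalently Proposition \ref{p6}), there is $\epsilon>0$ so that $\limsup_{n\to\infty}\frac{\log\sharp\F(n,\alpha,\epsilon)}{n\log\beta} < s$, hence $\sharp\F(n,\alpha,\epsilon) \le \beta^{sn}$ for all large $n$. Each cylinder $I_n(\nu)$ has length at most $\beta^{-n}$, so for any $M$ the family $\{I_n(\nu): n\ge M,\ \nu\in\F(n,\alpha,\epsilon)\}$ is a cover of $E_{\ni\alpha}$ by sets of diameter $\le \beta^{-M}\to 0$, and
$$
\sum_{n\ge M}\ \sum_{\nu\in\F(n,\alpha,\epsilon)} |I_n(\nu)|^{s}
\ \le\ \sum_{n\ge M} \sharp\F(n,\alpha,\epsilon)\,\beta^{-ns}
\ \le\ \sum_{n\ge M} \beta^{sn}\beta^{-ns} \ = \ \sum_{n\ge M} 1,
$$
which diverges and so is useless as written. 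This is the one genuine obstacle: the naive exponent-counting does not close, because the $\limsup$ only controls the count up to a subexponential slack that is not summable against $\beta^{-ns}$. The fix is standard: replace $s$ by $s' = h_\beta(\alpha) + 2\delta$ with $s = h_\beta(\alpha)+\delta$, choose $\epsilon$ via Proposition \ref{p6} so that $\sharp\F(n,\alpha,\epsilon) \le \beta^{(h_\beta(\alpha)+\delta)n}$ for $n$ large, and then estimate the $s'$-dimensional Hausdorff sum by $\sum_{n\ge M}\beta^{(h_\beta(\alpha)+\delta)n}\beta^{-(h_\beta(\alpha)+2\delta)n} = \sum_{n\ge M}\beta^{-\delta n} \to 0$ as $M\to\infty$. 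Hence $\mathcal H^{s'}(E_{\ni\alpha}) = 0$, giving $\hdim E_{\ni\alpha} \le h_\beta(\alpha)+2\delta$, and letting $\delta\to 0$ finishes the bound.

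Combining the two inequalities yields $\hdim E_{\ni\alpha} = h_\beta(\alpha) = \hdim E_\alpha$. I expect the only subtlety worth spelling out in the write-up is precisely the gap between the two exponents $h_\beta(\alpha)+\delta$ and $h_\beta(\alpha)+2\delta$: one must apply Proposition \ref{p6} (or the $\limsup$ formula in Proposition \ref{p5}) \emph{after} fixing the target dimension, not before, so that the subexponential error in $\sharp\F(n,\alpha,\epsilon)$ is absorbed by the extra $\delta$ in the exponent and the resulting geometric series converges to $0$ as the cover refines. Everything else — the covering inclusion, the bound $|I_n(\nu)|\le\beta^{-n}$, and the passage $\delta\to 0$ — is routine.
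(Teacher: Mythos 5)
Your proposal is correct and follows essentially the same route as the paper: the inclusion $E_\alpha\subset E_{\ni\alpha}$ for one direction, and for the other the covering of $E_{\ni\alpha}$ by the cylinders indexed by $\F(n,\alpha,\epsilon)$ with the count controlled via Proposition \ref{p6}, summed against $|I_n(\nu)|^s\le\beta^{-ns}$. The "two-exponent" fix you spell out ($h_\beta(\alpha)+\delta$ for the count versus $s>h_\beta(\alpha)+\delta$ for the Hausdorff sum, so the geometric series converges) is exactly what the paper does implicitly when it says the sum is finite for any $s>h_\beta(\alpha)+\delta$ and then lets $\delta\to0$.
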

\begin{proof}
It's obvious that $E_{\alpha}\subset E_{\ni\alpha}$, so $$\hdim E_{\alpha}\leq\hdim E_{\ni\alpha}.$$

Now we turn to the converse inequality. This is given by showing that $$
\hdim E_{\ni\alpha}\le h_{\beta}(\alpha).
$$

Fix $\delta>0.$ By proposition \ref{p6},
there exist $\epsilon=\epsilon(\delta)$ and $N(\delta,\epsilon)\in\mathbb{N}$ such that for any $n\geq N(\delta,\epsilon),$
\begin{equation}\label{f6}
  \log\sharp \mathbb{F}(n,\alpha,\epsilon)\leq n(h_{\beta}(\alpha)+\delta)\log\beta.
\end{equation}

On the other hand, it is clear that \begin{align*}
E_{\ni\alpha}&\subset \Big\{x\in [0,1): \big|\frac{1}{n}S_n\psi(x)-\alpha\big|<\epsilon, \ {\text{i.o.}}\ n\in \mathbb{N}\Big\}\\
&=\bigcap_{N=1}^{\infty}\bigcup_{n=N}^{\infty}\bigcup_{
\nu\in\mathbb{F}(n,\alpha,\epsilon)}I_n(\nu),\end{align*} where {\em i.o.} stands for {\em infinitely often}.
Therefore, the $s$-dimensional Hausdorff measure of $E_{\ni\alpha}$ can be estimated as
\begin{align}
\nonumber \mathcal{H}^{s}(E_{\ni\alpha}) &\leq\liminf_{N\rightarrow\infty}\sum_{n=N}^{\infty}\sum_{\nu\in\mathbb{F}(n,\alpha,\epsilon)}|I_n(\nu)|^s\\
 \nonumber&\leq\liminf_{N\rightarrow\infty}\sum_{n=N}^{\infty}\sharp\mathbb{F}(n,\alpha,\epsilon)\cdot\beta^{-ns},
\end{align}
which is finite for any $s>h_{\beta}(\alpha)+\delta$ by (\ref{f6}). The arbitrariness of $s$ yields that
$$\hdim E_{\ni\alpha}\leq h_{\beta}(\alpha)+\delta.$$
\end{proof}
The following is an elementary result, which shows that $\A(x)$ is a closed interval.
\begin{pro}\label{p9}
$\A(x)$ is a closed interval, more precisely,
\begin{equation}\label{f7}\A(x)=\Bigg[\liminf_{n\rightarrow\infty}\frac{S_n\psi(x)}{n},\limsup_{n\rightarrow\infty}\frac{S_n\psi(x)}{n}\Bigg].\end{equation}
\end{pro}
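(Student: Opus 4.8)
The plan is to prove the two inclusions in \eqref{f7} separately, writing $\underline{\alpha}=\liminf_{n\to\infty}\frac{1}{n}S_n\psi(x)$ and $\overline{\alpha}=\limsup_{n\to\infty}\frac{1}{n}S_n\psi(x)$. First note that both quantities are finite: since $\psi$ is continuous on the compact interval $[0,1]$, it is bounded, say $|\psi|\le M$, so $|\frac{1}{n}S_n\psi(x)|\le M$ for every $n$, and the sequence $\{\frac1n S_n\psi(x)\}_{n\ge1}$ is bounded. Consequently its set of accumulation points $\A(x)$ is nonempty, and $\underline{\alpha}$ and $\overline{\alpha}$ are genuine real numbers that themselves belong to $\A(x)$ (the liminf and limsup of a bounded sequence are always attained as subsequential limits). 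This already gives $\A(x)\subset[\underline{\alpha},\overline{\alpha}]$ for free (every accumulation point lies between the liminf and the limsup), and $\underline{\alpha},\overline{\alpha}\in\A(x)$.

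The substantive direction is the reverse inclusion $[\underline{\alpha},\overline{\alpha}]\subset\A(x)$: I need to show that every $\gamma$ strictly between $\underline{\alpha}$ and $\overline{\alpha}$ is an accumulation point. The key device is that consecutive terms of the sequence vary by an amount tending to $0$: from $S_{n+1}\psi(x)=S_n\psi(x)+\psi(T_\beta^n x)$ and $|\psi(T_\beta^n x)|\le M$ we get
\begin{equation}\label{f-aux}
\Big|\frac{1}{n+1}S_{n+1}\psi(x)-\frac{1}{n}S_n\psi(x)\Big|\le \frac{2M}{n+1}\xrightarrow[n\to\infty]{}0.
\end{equation}
(Alternatively one could invoke Lemma \ref{l1} with $N=1$, but the elementary estimate \eqref{f-aux} suffices.) Fix $\gamma\in(\underline{\alpha},\overline{\alpha})$ and $\epsilon>0$. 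Since $\underline{\alpha}<\gamma$, there are infinitely many $n$ with $\frac1n S_n\psi(x)<\gamma$; since $\overline{\alpha}>\gamma$, there are infinitely many $n$ with $\frac1n S_n\psi(x)>\gamma$. Given any threshold $n_1$, choose, using \eqref{f-aux}, an index $n_0\ge n_1$ past which consecutive terms differ by less than $\epsilon$; then pick $p>n_0$ with $\frac1p S_p\psi(x)<\gamma$ and $q>n_0$ with $\frac1q S_q\psi(x)>\gamma$. Walking one step at a time from $\min(p,q)$ to $\max(p,q)$, the sequence passes from one side of $\gamma$ to the other in increments smaller than $\epsilon$, so some intermediate index $m$ with $n_0<m$ satisfies $|\frac1m S_m\psi(x)-\gamma|<\epsilon$. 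As $n_1$ was arbitrary this produces infinitely many such $m$, i.e. $\gamma\in\A(x)$. Finally, since $\A(x)$ is closed (being the set of accumulation points of a sequence) and contains the open interval $(\underline{\alpha},\overline{\alpha})$ together with its endpoints, we conclude $\A(x)=[\underline{\alpha},\overline{\alpha}]$.

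The only mild subtlety — the "main obstacle", such as it is — is the discrete intermediate-value step: one must be careful that the argument works regardless of whether $p<q$ or $q<p$, and that the increments are controlled only for indices beyond $n_0$, which is why $p$ and $q$ are both chosen past $n_0$. Everything else is routine once the boundedness of $\psi$ on the compact set $[0,1]$ is recorded.
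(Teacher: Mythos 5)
Your proof is correct and follows essentially the same route as the paper: the substantive inclusion is obtained by noting that consecutive averages differ by $O(1/n)$ (via boundedness of $\psi$ on $[0,1]$) and then locating, for each level $\gamma$ strictly between the liminf and the limsup, indices where the sequence crosses $\gamma$, which is exactly the paper's choice of $n_k$ with $\bigl(\tfrac{1}{n_k}S_{n_k}\psi(x)-t\bigr)\bigl(\tfrac{1}{n_k+1}S_{n_k+1}\psi(x)-t\bigr)\le 0$. Your version merely spells out the discrete intermediate-value step a little more explicitly; no issues.
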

\begin{proof}
It suffices to show the inclusion $``\supset"$ in (\ref{f7}). Fix a real number  $t$ with
$$\liminf_{n\rightarrow\infty}\frac{S_n\psi(x)}{n}<t<\limsup_{n\rightarrow\infty}\frac{S_n\psi(x)}{n}.$$
Then one has $$
\frac{1}{n}S_n\psi(x)<t, {\text{i.o.}}\ n\in \mathbb{N}, \ \ {\text{and}} \ \ \ \frac{1}{n}S_n\psi(x)>t, \  {\text{i.o.}}\ n\in \mathbb{N}.
$$
 This enables us to find a sequence $\{n_k\}_{k\ge 1}$ such that $$
\left(\frac{1}{n_k}S_{n_k}\psi(x)-t\right)\left(\frac{1}{n_k+1}S_{n_k+1}\psi(x)-t\right)\le 0.
$$ Thus, $$
\Big|\frac{1}{n_k}S_{n_k}\psi(x)-t\Big|\le \Big|\frac{1}{n_k}S_{n_k}\psi(x)-\frac{1}{n_k+1}S_{n_k+1}\psi(x)\Big|.
$$ By the boundedness of $\psi$, the right term turns to 0 as $k\to \infty$. This gives that $$
\lim_{k\to \infty}\frac{1}{n_k}S_{n_k}\psi(x)=t.
$$
\end{proof}

The following dimension result about homogeneous Cantor set is a classic tool to estimate the Hausdorff dimension of a fractal set from below.

Let $\{m_i\}_{{i\in\mathbb{N}}}$ be a sequence of positive integers and $\{c_i\}_{i\in\mathbb{N}}$ be a sequence of positive numbers satisfying $m_i\geq2,$ $0<c_i< 1,$
$m_1c_1\leq \delta$ and $m_ic_i\leq1~(i\geq2),$ where $\delta$ is some positive number.
Let
$$D=\bigcup_{i\geq0}D_i,~~~~ D_0 =\emptyset, ~~D_i =\Big\{(\sigma_1,\cdots,\sigma_i): 1\leq \sigma_j\leq m_j,1\leq j\leq i \Big\}.$$
If $\sigma = (\sigma_1,\cdots,\sigma_k)\in D_k, \tau = (\tau_1,\cdots,\tau_m) \in D_m,$ the concatenation of $\sigma$ and $\tau$ is denoted by
$$\sigma\ast\tau=(\sigma_1,\cdots,\sigma_k,\tau_1,\cdots,\tau_m).$$

\begin{dfn}[\cite{FWW}]\label{d1}
Let $(X,d)$ be a metric space. Suppose that $J\subset X$ is a closed subset with diameter $\delta>0.$ Let $\mathfrak{F}=\{J_{\sigma}:\sigma\in D\}$ be a collection of
closed subsets of $J$ with the properties

\noindent(1) $J_{\emptyset}= J$;

\noindent(2) For any $i\geq1$ and $\sigma\in D_{i-1},$ $J_{\sigma\ast 1}, J_{\sigma\ast 2},\cdots, J_{\sigma\ast {m_i}}$ are subsets of $J_{\sigma}$ and $int( J_{\sigma\ast i})\bigcap int( J_{\sigma \ast j}) = \emptyset \ (i\neq j),$ where $int(\cdot) $ denotes the interior of a set;

\noindent(3) For any $i\geq 1$ and $\sigma\in D_{i-1}, 1\leq \ell \leq m_i,$
$$\frac{| J_{\sigma\ast \ell}|}{| J_{\sigma} | }= c_i,$$
where $|\cdot|$ denotes the diameter.
Then
$$\mathds{C}_{\infty}:=\bigcap_{i\geq1}\bigcup_{\sigma\in D_{i}}J_{\sigma}$$
 is called a homogeneous Cantor set determined by $\mathfrak{F}$. For each $i\ge 1$, we call the union $\mathds{C}_i:=\bigcup_{\sigma\in D_{i}}J_{\sigma}$ the $i$th generation of $\mathds{C}_{\infty}$.
\end{dfn}

\begin{lem}[\cite{FWW}]\label{l2}
For the homogeneous Cantor set defined above, we have
$$\hdim \mathds{C}_{\infty}\geq \liminf_{i\to \infty}\frac{\log m_1\cdots m_i}{-\log c_1\cdots c_{i+1}m_{i+1}}.$$
\end{lem}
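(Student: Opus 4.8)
The natural tool is the mass distribution principle, so the plan is to put a well-chosen measure on $\mathds{C}_\infty$ and control its local scaling. First I would define the \emph{uniform mass distribution} $\mu$ on $\mathds{C}_\infty$: for $\sigma\in D_i$ set $\mu(J_\sigma)=(m_1\cdots m_i)^{-1}$. This is consistent because each $J_\sigma$ with $\sigma\in D_{i-1}$ contains exactly $m_i$ sets of generation $i$, which split its mass equally; distinct sets of a fixed generation have disjoint interiors, so any overlap lies in a $\mu$-null set, and $\mu$ extends to a Borel probability measure supported on $\mathds{C}_\infty=\bigcap_i\mathds{C}_i$. I would also record that property~(3) of Definition~\ref{d1} gives $|J_\sigma|=\delta\,c_1\cdots c_i$ for $\sigma\in D_i$, that the $J_\sigma$ are (in the situation of interest, $J\subset[0,1)$) closed intervals, and that $m_i\ge2$ together with $m_ic_i\le1$ forces $c_i\le 1/2$, whence $\delta\,c_1\cdots c_n\to0$.

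Fix any $s<s^{\ast}:=\liminf_{i\to\infty}\frac{\log(m_1\cdots m_i)}{-\log(c_1\cdots c_{i+1}m_{i+1})}$; the aim is to show $\mu(B(x,r))\le C\,r^{s}$ for every $x\in\mathds{C}_\infty$ and every small $r>0$, where $B(x,r)$ denotes the ball of radius $r$ about $x$. Given $r\in(0,\delta)$ there is a unique $n=n(r)\ge0$ with $\delta\,c_1\cdots c_{n+1}\le r<\delta\,c_1\cdots c_n$ (empty products being $1$). The decisive step is to count the generation-$(n+1)$ sets meeting $B(x,r)$: these are disjoint subintervals of common length $\delta\,c_1\cdots c_{n+1}$, so on one hand $B(x,r)$ meets at most $\frac{2r}{\delta c_1\cdots c_{n+1}}+2\le\frac{4r}{\delta c_1\cdots c_{n+1}}$ of them (since $r\ge\delta c_1\cdots c_{n+1}$), while on the other hand, $r$ being below the length $\delta c_1\cdots c_n$ of a generation-$n$ set, $B(x,r)$ meets at most $3$ generation-$n$ sets and hence at most $3m_{n+1}$ generation-$(n+1)$ ones. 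Consequently
\[
\mu(B(x,r))\;\le\;\frac{1}{m_1\cdots m_{n+1}}\,\min\Big\{3m_{n+1},\ \frac{4r}{\delta\,c_1\cdots c_{n+1}}\Big\}.
\]

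Next I would apply the elementary bound $\min\{A,B\}\le A^{1-s}B^{s}$ (valid for $A,B>0$, $s\in[0,1]$) and simplify to obtain
\[
\mu(B(x,r))\;\le\;\frac{3\cdot 4^{s}}{\delta^{s}}\cdot\frac{1}{m_1\cdots m_n\,(c_1\cdots c_{n+1}m_{n+1})^{s}}\cdot r^{s}.
\]
Because $s<s^{\ast}$, there is $n_0$ with $(m_1\cdots m_n)^{-1}\le(c_1\cdots c_{n+1}m_{n+1})^{s}$ for all $n\ge n_0$, so the prefactor of $r^{s}$ stays bounded whenever $n(r)\ge n_0$; the finitely many remaining scales of $r$ are bounded away from $0$, where $\mu(B(x,r))\le1\le C'r^{s}$ trivially. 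Thus $\mu(B(x,r))\le C\,r^{s}$ for all small $r$, the mass distribution principle yields $\mathcal H^{s}(\mathds{C}_\infty)>0$, hence $\hdim\mathds{C}_\infty\ge s$, and letting $s\uparrow s^{\ast}$ finishes the proof.

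The one genuinely delicate point is the counting step, since no gap or separation condition is imposed between the generation-$(n+1)$ intervals: a ball can legitimately meet a large number of them, and the bound must marry the crude multiplicity estimate $3m_{n+1}$ with the metric packing estimate $\frac{4r}{\delta c_1\cdots c_{n+1}}$ via the interpolation above. It is precisely the surviving multiplicity $m_{n+1}$ of the \emph{next} generation that forces the term $-\log(c_1\cdots c_{n+1}m_{n+1})$ into the denominator of the dimension lower bound; everything else is routine bookkeeping with the natural measure.
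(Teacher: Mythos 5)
Your argument is correct and is essentially the standard proof of this result from the cited reference [FWW]: the paper itself gives no proof, and the mass distribution principle applied to the uniform measure $\mu(J_\sigma)=(m_1\cdots m_i)^{-1}$, with the interpolation $\min\{A,B\}\le A^{1-s}B^{s}$ between the multiplicity bound $3m_{n+1}$ and the packing bound $4r/(\delta c_1\cdots c_{n+1})$, is exactly how the term $-\log(c_1\cdots c_{n+1}m_{n+1})$ arises in the denominator. The only caveat is that your packing count genuinely requires the $J_\sigma$ to be intervals on the line rather than closed sets in an arbitrary metric space, but that is the setting of [FWW] and of the application in this paper, and you state this restriction explicitly.
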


\section{Proof of Theorem \ref{t1}}
Recall that $\mathfrak{L}_{\psi}$ is a closed interval. So without lost of generality, we assume that $[a,b]\subset\mathfrak{L}_{\psi}.$

\subsection{The first item in Theorem \ref{t1}}\

On the one hand, by the concavity of $h_{\beta}(\cdot)$, one has $$
\min\Big\{h_{\beta}(\alpha): \alpha\in [a,b]\Big\}=\min\Big\{h_{\beta}(a), h_{\beta}(b)\Big\}:=s_*.
$$

On the other hand, it is obvious that $E_{=[a,b]}\subset E_{\supset[a,b]}$. So to get the desired result, it suffices
to show that $$s_{*}\leq \hdim E_{=[a,b]}, \ \ \ \hdim E_{\supset[a,b]}\leq s_{*}.$$

The second inequality is a direct corollary from Proposition \ref{p8}, so we need only pay attention to the first one.

Since  $\A(x)$ is a closed interval (Proposition \ref{p9}), we can rewrite $E_{=[a,b]}$ as
$$E_{=[a,b]}=\Bigg\{x\in[0,1):\liminf_{n\rightarrow\infty}\frac{1}{n}S_n\psi(x)=a,\limsup_{n\rightarrow\infty}\frac{1}{n}S_n\psi(x)=b\Bigg\}.$$

We will construct a homogeneous Cantor set $\mathds{C}_{\infty}\subset E_{=[a,b]}$ with Hausdorff dimension bounded from below by $s_{*}.$

We fix some notations. \begin{itemize}\item At first, fix $\delta>0.$ By Proposition \ref{p6}, we can choose a sequence of triples $(\epsilon_k, N_k, n_k)_{k\ge 1}$ (recursively) such that $\epsilon_k \to 0$ as $k\to \infty$, $N_k>N_{k-1}$, $n_k/(n_k+N_k)\ge 1-\delta$ and for each $k\ge 1$,
\begin{equation}\label{f8}
  \Bigg|\frac{\log\sharp\F_{N_{2k-1}}(n_{2k-1},a,\epsilon_{2k-1})}{n_{2k-1}\log\beta}-h_{\beta}(a)\Bigg|<\delta
\end{equation}
\begin{equation}\label{f9}
  \Bigg|\frac{\log\sharp\F_{N_{2k}}(n_{2k},b,\epsilon_{2k})}{n_{2k}\log\beta}-h_{\beta}(b)\Bigg|<\delta.
\end{equation}
%
\item Secondly, we choose a sequence of integers $\{\ell_k\}_{k\ge 1}$ such that for each $k\ge 1$,
 \begin{equation}\label{e1}
\ell_k\gg n_{k+1}+N_{k+1}, \ \ \ell_{k}\gg \ell_1(n_1+N_1)+\cdots+\ell_{k-1}(n_{k-1}+N_{k-1})\end{equation}
and let $n_0=N_0=l_0=0.$
\item Thirdly, we define
\begin{eqnarray*} \mathbb{D}_{k}=\left\{
                                  \begin{array}{ll}
                                    \Big\{\omega=(v,0^{N_k}):v\in\F_{N_{k}}(n_{k},a,\epsilon_{k})\Big\}, & \hbox{when $k$ is odd;} \bigskip\\
                                    \Big\{\omega=(v,0^{N_k}):v\in\F_{N_{k}}(n_{k},b,\epsilon_{k})\Big\}, & \hbox{when $k$ is even.}
                                  \end{array}
                                \right.
\end{eqnarray*}
\item At last, we give an estimation. If $n_k$ is chosen sufficiently large compared with $N_k$, then by Lemma \ref{l1}, for each $\omega\in \mathbb{D}_k$ and every $x\in I_{n_k+N_k}(\omega)$, we have
\begin{equation}\label{e2}
  \Bigg|\frac{1}{n_k+N_k}\sum_{j=0}^{n_k+N_k-1}\psi(T_{\beta}^jx)-a \ (\text{or} \ b)\Bigg|<2\epsilon_k,
\end{equation}
according as $k$ is odd or even.
\end{itemize}
\begin{rem}\label{r1}Note that the words $v$ in $\F_{N_{k}}(n_{k},*,\epsilon_{k})$ are $\beta_{N_k}$-admissible. So the extra term $0^{N_k}$ ensures that every word $\omega$ in $\mathbb{D}_k$ can concatenate any other $\beta$-admissible words freely by Proposition \ref{p2}.
\end{rem}

Now we are in the position to construct a subset $\mathds{C}_{\infty}$ of $E_{=[a,b]}$ generation by generation. Firstly, let $\mathds{C}_0=[0,1].$
\medskip

\emph{The generations $\{\mathds{C}_i\}_{1\le i\le \ell_1}$.}
For each $1\le i\le \ell_1$, set
$$\mathds{C}_i=\bigcup_{\omega_1\in \mathbb{D}_1, \cdots, \omega_i\in \mathbb{D}_1}I_{i(n_1+N_1)}(\omega_1,\cdots, \omega_i).$$
By the remark given above and Proposition \ref{p2}, all the cylinders in $\mathds{C}_i$ are full cylinders of order $i(n_1+N_1)$. Then the ratio of the diameter of a cylinder in $\mathds{C}_{i-1}$ with that of a cylinder in $\mathds{C}_i$ is $\beta^{-(n_1+N_1)}$. Moreover, it is also clear that each cylinder in $\mathds{C}_{i-1}$ contains  $\sharp\mathbb{D}_1$ elements in $\mathds{C}_i$. So, by borrowing the notaton from Definition \ref{d1} of a homogeneous Cantor set, we have
\begin{equation}\label{e3}
c_i=\beta^{-(n_1+N_1)},\ \ m_i=\sharp \mathbb{D}_1=\sharp \F_{N_1}(n_1, a, \epsilon_1), \ {\text{for}}\ 1\le i\le \ell_1.
\end{equation}

\emph{The inductive step.} Assume that the first $(\ell_1+\cdots+\ell_{k-1})$th generations have been well defined. Note that $\mathds{C}_{\ell_1+\cdots+\ell_{k-1}}$ consists of a collection of full cylinders of order $$\ell_1(n_1+N_1)+\cdots+\ell_{k-1}(n_{k-1}+N_{k-1}):=t_{k-1}.$$ For notational simplication, we write a general element in $\mathds{C}_{\ell_1+\cdots+\ell_{k-1}}$ as $I_{t_{k-1}}(\omega^{(k-1)})$.
Note that here $\omega^{(k-1)}$ is a word of length $t_{k-1}$ which can concatenate any other $\beta$-admissible words(by Remark \ref{r1}).

Now we define the generations $$
\Big\{\mathds{C}_i: \ell_1+\cdots+\ell_{k-1}<i\le \ell_1+\cdots+\ell_{k-1}+\ell_k\Big\}.
$$
Write $j=i-(\ell_1+\cdots+\ell_{k-1})$. Then set $$
\mathds{C}_i=\bigcup_{I_{t_{k-1}}(\omega^{(k-1)})\subset \mathds{C}_{\ell_1+\cdots+\ell_{k-1}}}\bigcup_{\omega_1\in \mathbb{D}_k,\cdots,\omega_j\in \mathbb{D}_k}I_{t_{k-1}+i(n_k+N_k)}(\omega^{(k-1)}, \omega_1,\cdots,\omega_j).
$$ Similar to (\ref{e3}), we have, for $\ell_1+\cdots+\ell_{k-1}<i\le \ell_1+\cdots+\ell_{k-1}+\ell_k$, \begin{equation}\label{e4}
c_i=\beta^{-(n_k+N_k)},\ \ m_i=\sharp \mathbb{D}_k=\sharp \F_{N_k}(n_k, a, \epsilon_k) \ \ {\text{or}}\ \ \sharp \F_{N_k}(n_k, b, \epsilon_k)
\end{equation}  according as $k$ is odd or even.

The desired homogeneous Cantor set is defined as
 $$\mathds{C}_{\infty}=\bigcap_{i=0}^{\infty}\mathds{C}_i.$$
We claim that
\begin{pro}\label{p10}
 $\mathds{C}_{\infty}\subset E_{=[a,b]}$.
\end{pro}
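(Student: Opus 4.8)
The plan is to verify that every $x\in\mathds{C}_\infty$ has $\liminf_n\frac1nS_n\psi(x)=a$ and $\limsup_n\frac1nS_n\psi(x)=b$, using the estimate \eqref{e2} together with the gap conditions \eqref{e1} on the block lengths $\ell_k$. Fix $x\in\mathds{C}_\infty$. By construction $x$ lies in a nested sequence of cylinders, and after the $(\ell_1+\cdots+\ell_k)$-th generation the digit word of $x$ is a concatenation of the form $\omega^{(1)}\cdots$ where in the $k$-th stage we have appended $\ell_k$ consecutive blocks from $\mathbb{D}_k$, each of length $n_k+N_k$; recall $t_k=\ell_1(n_1+N_1)+\cdots+\ell_k(n_k+N_k)$.

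First I would extract the two "good" subsequences. Let $k$ be odd, say, and consider the index $m=t_{k-1}+\ell_k(n_k+N_k)=t_k$, i.e. the endpoint of the $k$-th stage. Iterating \eqref{e2} over the $\ell_k$ consecutive $\mathbb{D}_k$-blocks just appended, and combining with a bound on the contribution of the first $t_{k-1}$ coordinates, one gets
\[
\Big|\frac{1}{t_k}S_{t_k}\psi(x)-a\Big|\le 2\epsilon_k+\frac{t_{k-1}}{t_k}\,\|\psi\|_\infty\cdot C
\]
for a harmless constant $C$ (since $|\psi|$ is bounded and the first $t_{k-1}$ terms each contribute at most $\|\psi\|_\infty$). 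The second gap condition in \eqref{e1}, $\ell_k\gg t_{k-1}$, forces $t_{k-1}/t_k\to 0$, and $\epsilon_k\to0$; hence $\frac{1}{t_k}S_{t_k}\psi(x)\to a$ along odd $k$, and similarly $\to b$ along even $k$. This shows $a,b\in\A(x)$, so $\liminf\le a$ and $\limsup\ge b$.

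The main obstacle — and the reason the first gap condition $\ell_k\gg n_{k+1}+N_{k+1}$ in \eqref{e1} is needed — is the reverse: showing that no accumulation point escapes $[a,b]$, i.e. $\liminf\ge a$ and $\limsup\le b$. For this I would fix an arbitrary large $n$ and locate it relative to the block structure: there is a unique $k$ and a position inside the $k$-th stage (or straddling stages $k$ and $k{+}1$) so that $n$ falls within, or just past, a run of complete $\mathbb{D}_k$-blocks. Writing $S_n\psi(x)$ as (sum over complete $\mathbb{D}_k$-blocks up to $n$) $+$ (a tail of length $<n_k+N_k$ inside a partial block) $+$ (the bounded prefix of length $t_{k-1}$), the complete-block part is within $2\epsilon_k$ of $a$ (resp. $b$) times its length by \eqref{e2}, the prefix contributes $O(t_{k-1})=o(n)$ because $n\gtrsim\ell_k\gg t_{k-1}$ once we are deep enough in stage $k$, and the partial-block tail contributes at most $(n_k+N_k)\|\psi\|_\infty$, which is $o(n)$ because $n$ is comparable to $\ell_k(n_k+N_k)\gg n_k+N_k$ — here one also uses $\ell_{k-1}\gg n_k+N_k$ from \eqref{e1} to handle $n$ lying in the early part of stage $k$, and Lemma \ref{l1} to pass from $S_n$ to an average over a near-multiple of $n_k+N_k$. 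Consequently $\frac1nS_n\psi(x)$ lies within $o(1)$ of the interval $[a-2\epsilon_k,b+2\epsilon_k]$, and letting $n\to\infty$ (so $k\to\infty$, $\epsilon_k\to0$) gives $\A(x)\subset[a,b]$. Combining both directions with Proposition \ref{p9} yields $\A(x)=[a,b]$, i.e. $x\in E_{=[a,b]}$.
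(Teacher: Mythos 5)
Your overall strategy coincides with the paper's: show that the Birkhoff averages along the subsequence $t_k$ converge to $a$ (odd $k$) and to $b$ (even $k$), show that for general $n$ the average $\frac1nS_n\psi(x)$ cannot escape a shrinking neighbourhood of $[a,b]$, and conclude via Proposition \ref{p9}. The subsequence computation is correct. The gap is in your treatment of the prefix of length $t_{k-1}$ in the general-$n$ estimate: you bound its contribution by $O(t_{k-1}\|\psi\|_\infty)$ and claim this is $o(n)$. That is false for a whole range of $n$ --- whenever $t_{k-1}\le n\le Ct_{k-1}$ for a fixed constant $C$ (e.g.\ $n=2t_{k-1}$), the prefix occupies a positive proportion of $[0,n)$, and a bound of $\|\psi\|_\infty$ per prefix coordinate only yields $\frac1nS_n\psi(x)\in[a,b]+O(\|\psi\|_\infty)$, not $[a,b]+o(1)$. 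Your two patches do not close this: ``deep enough in stage $k$'' covers only $n\gg t_{k-1}$, while the appeal to $\ell_{k-1}\gg n_k+N_k$ and Lemma \ref{l1} helps only when $n-t_{k-1}=o(t_{k-1})$ (Lemma \ref{l1} adds a \emph{fixed} number $N$ of terms, not one growing with $n$); the intermediate regime $n-t_{k-1}\asymp t_{k-1}$ is covered by neither.

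The fix --- and what the paper actually does --- is to estimate the prefix not by $\|\psi\|_\infty$ per coordinate but by \eqref{e2} applied to each complete block of stages $1,\dots,k-1$: a $\mathbb{D}_i$-block of length $n_i+N_i$ contributes at most $(n_i+N_i)(b+2\epsilon_i)$ and at least $(n_i+N_i)(a-2\epsilon_i)$, since both targets $a$ and $b$ lie in $[a,b]$. Summing, the prefix contributes between $(a-o(1))t_{k-1}$ and $(b+o(1))t_{k-1}$, whence
\[
S_n\psi(x)\le b\bigl(t_{k-1}+\ell(n_k+N_k)\bigr)+o(t_{k-1})+j\|\psi\|_\infty\le bn+o(n),
\]
with the matching lower bound involving $a$; the partial-block term $j\|\psi\|_\infty\le(n_k+N_k)\|\psi\|_\infty=o(\ell_{k-1})=o(n)$ is where \eqref{e1} genuinely enters. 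Equivalently, you could write $\frac1nS_n\psi(x)$ as a convex combination of $\frac1{t_{k-1}}S_{t_{k-1}}\psi(x)$ --- which your own subsequence computation already places within $o(1)$ of $\{a,b\}\subset[a,b]$ --- and the average over $[t_{k-1},n)$. Either way, the information you need about the prefix is that its \emph{average} lies near $[a,b]$, not merely that it is bounded.
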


\begin{proof}
The proof is done by some elementary estimations, so we only check that for each $x\in \mathds{C}_{\infty}$,\begin{equation}\label{e5}
\limsup_{n\to \infty}\frac{1}{n}S_n\psi(x)\le b\ \ {\text{and}}\ \ \lim_{k\to\infty}\frac{1}{t_{2k}}S_{t_{2k}}\psi(x)\ge b.
\end{equation}

Bear in mind the construction of $\mathds{C}_{\infty}$, the formula (\ref{e2}) and $$
t_k=\ell_1(n_1+N_1)+\cdots+\ell_k(n_k+N_k).
$$

(i). For each $n\gg 1$, let $k$ and then $0\le \ell< \ell_k$, $0\le j< n_k+N_k$ be the integers such that $$
t_{k-1}\le n=t_{k-1}+\ell (n_k+N_k)+j<t_k.
$$
Since $a\le b$, then by (\ref{e2}), we have \begin{align*}
S_n\psi(x)\le \ell_1\cdot (n_1+N_1)(b+2\epsilon_1)&+\cdots+ \ell_{k-1}\cdot (n_{k-1}+N_{k-1})(b+2\epsilon_{k-1})\\
&+\ell \cdot (n_k+N_k) (b+2\epsilon_k)+j \|\psi\|_{\infty}.
\end{align*} Recall the choice of $\ell_k$ (see the first formula in (\ref{e1})) and the fact that $\epsilon_k\to 0$ as $k\to \infty$. One can say that $$
n_i+N_i=n_i+o(n_i), \ \  j\|\psi\|_{\infty}=o(\ell_{k-1})=o(t_{k-1}).
$$ Thus it follows that $$
S_n\psi(x)\le b\Big(t_{k-1}+\ell (n_k+N_k)\Big)+o(t_{k-1})\le b\cdot n+o(n).
$$ Therefore,
the first assertion in (\ref{e5}) follows.

(ii). By the second formula in (\ref{e1}) and the inequality (\ref{e2}), we have $$
S_{t_{2k}}\psi(x)\ge o(\ell_{2k})+\ell_{2k}(n_{2k}+N_{2k})(b-2\epsilon_{2k}).
$$ Using the second formula in (\ref{e1}) again, which enables us to say that $\ell_{2k}(n_{2k}+N_{2k})=t_{2k}+o(t_{2k})$, the second assertion in (\ref{e5})  thus follows.
\end{proof}

\subsection{Hausdorff dimension of $\mathds{C}_{\infty}$.} We apply Lemma \ref{l2} to give the lower bound of $\hdim \mathds{C}_{\infty}$.

Now we collect the information about $c_i$ and $m_i$: for $\ell_1+\cdots+\ell_{k-1}<i\le \ell_1+\cdots+\ell_{k-1}+\ell_k$, \begin{align*}
c_i=&\beta^{-(n_k+N_k)},\\ \ \beta^{n_k(h_{\beta}(a)-\delta)}\le m_i\le \beta^{n_k(h_{\beta}(a)+\delta)}
\ \ & {\text{or}}\ \ \ \ \beta^{n_k(h_{\beta}(b)-\delta)}\le m_i\le \beta^{n_k(h_{\beta}(b)+\delta)},
\end{align*} according as $k$ is odd or even (see the formulae (\ref{e4}),(\ref{f8}) and (\ref{f9})).

Since $\ell_{k-1}\gg n_k+N_k$, the term $\log (c_{i+1}\cdot m_{i+1})$ is negligible compared with $\log (c_1\cdots c_i)$.  So, by Lemma \ref{l2}, \begin{align*}
\hdim \mathds{C}_{\infty}&\ge \liminf_{i\to \infty}\frac{\log (m_1\cdots m_i)}{-\log (c_1\cdots c_i)}\ge \inf_{i\ge 1}\frac{\log m_i}{-\log c_i}\\&\ge (1-\delta)\min\Big\{h_{\beta}(a)-\delta, h_{\beta}(b)-\delta\Big\}.
\end{align*}
Then by the arbitrariness of $\delta>0$, it follows that $$
\hdim E_{=[a,b]}\ge \min\Big\{h_{\beta}(a), h_{\beta}(b)\Big\}.
$$
%
%
%

\subsection{The second item in Theorem \ref{t1}}

Recall that $$E_{\subset[a,b]}=\big\{x:\A(x)\subset[a,b]\big\}.$$ Write $s^{*}=\sup\big\{h_{\beta}(\alpha):\alpha\in[a,b]\big\}$.

\textbf{Lower bound.} It is clear  that for each $\alpha\in [a,b]$, $E_{\alpha}\subset E_{\subset [a,b]}$. Thus
$$\hdim E_{\subset[a,b]}\ge h_{\beta}(\alpha), \ {\text{for all}}\ \alpha\in [a,b].$$

\textbf{Upper bound.} To get the upper bound, we first cite a Lebesgue covering Lemma from\cite{W}:
\begin{lem}[\cite{W}]\label{l3}
 Let $(X, d)$ be a compact metric space and $\mathfrak{O}$  an open cover of $X$. Then there exists a $\delta>0$ such that each subset of $X$ of diameter less than or equal to $\delta$ lies in some member of $\mathfrak{O}.$ (Such a $\delta$ is called a Lebegue number of $\mathfrak{O}$.)
\end{lem}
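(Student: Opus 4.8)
The plan is to prove this by contradiction, exploiting the sequential compactness of a compact metric space. Suppose no such $\delta$ exists. Then for every $n\ge 1$ the choice $\delta=1/n$ fails, so there is a subset $A_n\subset X$ with $\mathrm{diam}(A_n)\le 1/n$ that is \emph{not} contained in any single member of $\mathfrak{O}$. Each such $A_n$ is necessarily nonempty, since the empty set is trivially contained in any open set; so we may pick a point $x_n\in A_n$ for each $n$.

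Since $(X,d)$ is a compact metric space it is sequentially compact, and hence the sequence $\{x_n\}_{n\ge 1}$ has a convergent subsequence $x_{n_k}\to x\in X$. Because $\mathfrak{O}$ is a cover of $X$, there is some $O\in\mathfrak{O}$ with $x\in O$, and since $O$ is open there is a radius $r>0$ with $B(x,r)\subset O$. The plan is to show that $A_{n_k}\subset O$ for all large $k$, which contradicts the defining property of $A_{n_k}$.

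To carry this out, choose $k$ large enough that simultaneously $d(x_{n_k},x)<r/2$ and $1/n_k<r/2$. For an arbitrary $y\in A_{n_k}$, the triangle inequality together with $\mathrm{diam}(A_{n_k})\le 1/n_k$ gives $d(y,x)\le d(y,x_{n_k})+d(x_{n_k},x)<1/n_k+r/2<r$, so $y\in B(x,r)\subset O$. Hence $A_{n_k}\subset O$, the desired contradiction, and therefore a Lebesgue number $\delta>0$ must exist.

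There is no genuine obstacle here; the only point that requires a little care is combining the two smallness conditions, namely the closeness of $x_{n_k}$ to the limit $x$ and the smallness of $\mathrm{diam}(A_{n_k})$, so that \emph{every} point of $A_{n_k}$, not just the chosen $x_{n_k}$, lands inside the single ball $B(x,r)$. An alternative, direct (non-contradiction) argument reaches the same conclusion: for each $x\in X$ pick $O_x\in\mathfrak{O}$ and $\vep_x>0$ with $B(x,2\vep_x)\subset O_x$; by compactness extract a finite subcover $\{B(x_i,\vep_{x_i})\}_{i=1}^m$ of the open cover $\{B(x,\vep_x)\}_{x\in X}$, and set $\delta=\min_{1\le i\le m}\vep_{x_i}$. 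A triangle-inequality check then shows that any set of diameter at most $\delta$ meeting $B(x_i,\vep_{x_i})$ is contained in $B(x_i,2\vep_{x_i})\subset O_{x_i}$, which again yields the claim.
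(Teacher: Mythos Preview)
Your proof is correct and is the standard contradiction argument for the Lebesgue number lemma; the alternative direct argument you sketch is also valid. However, the paper does not actually prove this lemma: it is quoted as a cited result from Walters~\cite{W} and used as a black box in the upper-bound estimate for $\hdim E_{\subset[a,b]}$, so there is no ``paper's own proof'' to compare against.
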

Fix $\eta>0$. Recall the definition of $h_{\beta}(\alpha)$ (Proposition \ref{p5}). Then for any $\alpha\in[a,b],$ there exist $\epsilon_{\alpha}=\epsilon(\alpha,\eta)$ and $N(\eta,\epsilon_{\alpha})\in\mathbb{N}$ such that for any $n\geq N(\eta,\epsilon_{\alpha}),$ we get
$$ \log\sharp \mathbb{F}(n,\alpha,\epsilon_{\alpha})\leq n(h_{\beta}(\alpha)+\eta)\log\beta.$$
So for each $\alpha\in[a,b],$ we have
\begin{equation}\label{e6}
  \log\sharp \mathbb{F}(n,\alpha,\epsilon_{\alpha}) \leq n(s^{*}+\eta)\log\beta.
\end{equation}

Clearly, $\big\{B(\alpha,\epsilon_{\alpha}):\alpha\in[a,b]\big\}$ is an open covering of the closed interval $[a,b].$ Then by Lemma \ref{l3}, let $\delta$ be a lebesgue number of this cover. Take $\big\{[a_k,b_k]:k\geq1\big\}$ a collection of closed intervals with diameter at most $\frac{\delta}{4}$ such that $$[a,b]=\bigcup_{k\geq1}[a_k,b_k].$$ Then
$$E_{\subset[a,b]}=\Big\{x:\A(x)\subset[a,b]\Big\}\subset
\bigcup_{k\geq1}E^{(k)}$$
where $E^{(k)}=\big\{x:\A(x)\cap[a_k,b_k]\neq\emptyset\big\}.$

For each $k\ge 1$, by using Lebesuge covering Lemma, there exists $\alpha\in [a,b]$ such that $$
[a_k-\delta/4, b_k+\delta/4]\subset B(\alpha, \epsilon_{\alpha}).
$$

On the other hand, for each $x\in E^{(k)},$ there exists $t_x\in [a_k,b_k]$ such that $t_x\in \A(x).$ Then there are infinitely many $n$ such that
$$\Big|\frac{1}{n}S_n\psi(x)-t_x\Big|<{\delta}/{4}.$$
Since
$$B\Big(t_x,{\delta}/{4}\Big)\subset\Big[a_k-{\delta}/{4},b_k+{\delta}/{4}\Big]\subset B(\alpha,\epsilon_{\alpha}),$$ we have
$$ E^{(k)}\subset\Bigg\{x:\Big|\frac{1}{n}S_n\psi(x)-\alpha\Big|<\epsilon_{\alpha}\ {\text{i.o.}}\ n\in \N\Bigg\}.$$

Therefore,
$$E^{(k)}\subset\bigcap_{N=1}^{\infty}\bigcup_{n=N}^{\infty}\bigcup_{
\nu\in\mathbb{F}(n,\alpha,\epsilon_{\alpha})}I_n(\nu).$$
Then the $s$-dimensional Hausdorff measure of $E^{(k)}$ can be estimated as
\begin{align}
\nonumber \mathcal{H}^{s}(E^{(k)})
&\leq\liminf_{N\rightarrow\infty}\sum_{n=N}^{\infty}\sum_{\nu\in\mathbb{F}(n,\alpha,\epsilon_{\alpha})}|I_n(\nu)|^s\\
 \nonumber&\leq\liminf_{N\rightarrow\infty}\sum_{n=N}^{\infty}\sharp\mathbb{F}(n,\alpha,\epsilon_{\alpha})\cdot\beta^{-ns},
\end{align}
which is finite for any $s>s^{*}+\eta$ by (\ref{e6}). The arbitrariness of $s$ yields that
$$\hdim E^{(k)}\leq s^{*}+\eta.$$
Hence
$$\hdim E_{\subset[a,b]}\leq \sup_{k\geq1}\hdim E^{(k)}\leq s^{*}+\eta.$$


%




\end{document}